\newtheorem{theorem}{Theorem}[section]
\newtheorem{lemma}[theorem]{Lemma}
\newtheorem{proposition}[theorem]{Proposition}
\newtheorem{example}[theorem]{Example}
\begin{document}

\title[Two-point AG codes on the GK maximal curves]{Two-point AG codes on the GK maximal curves}

\author{A. Sepulveda and G. Tizziotti}

\maketitle

\begin{abstract}
We determine de Weierstrass semigroup of a pair of certain rational points on the GK-curves. We use this semigroup to obtain two-point AG codes with better parameters than comparable one-point AG codes arising from these curves. These parameters are new records in the MinT's tables.
\end{abstract}

\medskip

\textbf{Keywords:} AG codes, GK curve, two-point codes, Weierstrass semigroup

\section{Introduction}

V.D. Goppa, \cite{goppa1} and \cite{goppa2}, constructed error-correcting codes using algebraic curves, the called \emph{algebraic geometric codes} (AG codes). The introduction of methods from algebraic geometry to construct good linear codes was one of the major developments in the theory of error-correcting codes. From that moment, in the early 1980s, many studies have emerged and the theory of Weierstrass semigroup is an important part in the study of AG codes. Its use comes from the theory of one-point codes, where there exist close connections between the parameters of one-point codes and its dual with the Weierstrass semigroup over one point on the curve, see for example \cite{vanlint}. Later these results were extended to codes and semigroups over two or more points. In \cite{gretchen}, Matthews proved for arbitrary curves that the Weierstrass gap set at a pair of points may be exploited to define a code with minimum distance greater than the Goppa bound. Despite the great interest of these codes, its utility is limited by the difficult of computing the Weierstrass semigroup at two points. In this sense, two-points codes over specific curves has been studied. In particular, for the maximal curves: Hermitian curve, Duursma and Kirov in \cite{Duursma}; Suzuki curve, Matthews in \cite{gretchen2}; and $y^q+y=x^{q^r+1}$ curve, Sepúlveda and Tizziotti in \cite{ST}.

In this work, we focus our attention on the $GK$ curves, which are maximal curves construct by Giulietti and Korchmáros over $\mathbb{F}_{q^6}$ which cannot be covered by the Hermitian curve whenever $q>2$. In \cite{GKcodes}, Fanali e Giulietti have investigated one-point AG codes over $GK$ curves and found linear codes with better parameters with respect those known previously. Here we determinate the Weierstrass semigroup $H(P_1,P_2)$ at certain two points on the $GK$ and we user this semigroup to construct two-point AG codes with better parameters than comparable one-point AG codes. Furthermore, these parameters are new records in the MinT's tables \cite{MinT}.

This work is organized as follows. In the Section 2 we introduce some basic facts about Weierstrass semigroup, AG codes and the $GK$ curves. In the Section 3, we determine the Weierstrass semigroup of a pair of points on the curve $GK$. Finally, in Section 4 we use results of the previous section to construct two-point AG codes which parameters are new records.

\section{Preliminaries}

\subsection{Weierstrass semigroup}

Let $\mathcal{X}$ be a non-singular, projective, irreducible, algebraic curve of genus $g \geq 1$ over a finite field $\mathbb{F}_{q}$ and $\mathbb{F}_{q}(\mathcal{X})$ be the field of rational functions on $\mathcal{X}$. Let $P$ be a rational point on $\mathcal{X}$ and $\mathbb{N}_{0}$ be the set of nonnegative integers. The set
$$
H(P):= \{n \in \mathbb{N}_{0} \mbox{ ; } \exists f \in \mathbb{F}_{q}(\mathcal{X}) \mbox{ with } (f)_{\infty} = n P \}
$$
where $(f)_{\infty}$ denotes the divisor of poles of $f$, is a semigroup, called the \textit{Weierstrass semigroup} of $\mathcal{X}$ at $P$. The set $G(P) = \mathbb{N}_{0} \setminus H(P)$ is called \textit{Weierstrass gap set} of $P$ and its cardinality is exactly
 $g$. In the case of two distinct rational points $P_{1}$ and $P_{2}$ on $\mathcal{X}$ we have the set
$$
H(P_{1}, P_{2}) = \{(n_{1}, n_{2}) \in \mathbb{N}_{0}^2 \mbox{ ; } \exists f \in \mathbb{F}_{q}(\mathcal{X}) \mbox{ with } (f)_{\infty} = n_{1} P_{1} + n_{2} P_{2} \};
$$

that is, the Weierstrass semigroup of $\mathcal{X}$ at $P_{1}$ and $P_{2}$. Analogously, the set $G(P_{1}, P_{2}) = \mathbb{N}_{0}^2 \setminus H(P_{1}, P_{2})$ is called the  Weierstrass gap set of the pair $(P_{1}, P_{2})$. Unlike the one-point case, the cardinality of $G(P_{1}, P_{2})$ depends of the choice of points $P_{1}$ and $P_{2}$ (see \cite{kim}). The study of Weierstrass semigroup of a pair of points was initiated by Arbarello et al., in \cite{arbarello}. Homma, in \cite{homma2}, found bounds for the cardinality of $G(P_{1}, P_{2})$, and discovered a connection between $H(P_{1}, P_{2})$ and a permutation of the set $\{1,2, \ldots, g\}$. 

Now we give some concepts that are important in this work. Let $P_{1}$ and $P_{2}$ be rational points on $\mathcal{X}$. We define $\beta_{\alpha} := min \{ \beta \in \mathbb{N}_{0} \mbox{ ; } (\alpha, \beta ) \in H(P_{1}, P_{2}) \}$ and we have that $\{ \beta_{\alpha} \mbox{ ; } \alpha \in G(P_{1}) \} = G(P_{2})$, see Lemma 2.6 in \cite{kim}. If $\alpha_{1} < \alpha_{2} < \cdots < \alpha_{g}$ and $\beta_{1} < \beta_{2} < \cdots < \beta_{g}$ are the gaps sequences at $P_{1}$ and $P_{2}$, respectively, then the above equality implies that there exist a one-to-one correspondence between $G(P_{1})$ and $G(P_{2})$. So there exists a permutation $\sigma$ of the set $\{1,2, \ldots , g\}$ such that $\beta_{\alpha_{i}} = \beta_{\sigma(i)}$. This permutation is denoted by $\sigma(P_{1}, P_{2})$. The graph of the bijective map between $G(P_{1})$ and $G(P_{2})$, denoted by $\Gamma (P_{1}, P_{2})$, is the set
$$
\Gamma(P_{1}, P_{2})  : = \{ (\alpha_{i} , \beta_{\alpha_{i}}) \mbox{ ; }  i=1,2, \ldots ,g \} =  \{ (\alpha_{i} , \beta_{\sigma(i)}) \mbox{ ; }  i=1,2, \ldots ,g \}.
$$

\begin{lemma} \label{lemma 1} \cite[Lemma 2]{homma2}
Let $\Gamma '$ be a subset of $(G(P_{1}) \times G(P_{2})) \cap H(P_{1},P_{2})$. If there exists a permutation $\tau$ of $\{ 1, 2, \ldots , g\}$ such that $\Gamma ' = \{ (\alpha_{i} , \beta_{\tau(i)}) \mbox{ ; }  i=1,2, \ldots ,g \}$, then $\Gamma ' = \Gamma(P_{1}, P_{2})$.
\end{lemma}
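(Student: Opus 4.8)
The plan is to show that the permutation $\tau$ in the hypothesis must coincide with the permutation $\sigma = \sigma(P_1,P_2)$ that defines $\Gamma(P_1,P_2)$; once this is done the conclusion $\Gamma' = \Gamma(P_1,P_2)$ is immediate, since by construction $\Gamma(P_1,P_2) = \{(\alpha_i,\beta_{\sigma(i)}) \mbox{ ; } i = 1,\dots,g\}$. The key inputs are the definition of $\beta_{\alpha_i}$ as the minimal admissible second coordinate, together with Lemma 2.6 of \cite{kim}, which gives $\{\beta_{\alpha_i} \mbox{ ; } i = 1,\dots,g\} = G(P_2)$ and hence $\beta_{\alpha_i} = \beta_{\sigma(i)}$ for every $i$.

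First I would fix $i$ and compare $\beta_{\tau(i)}$ with $\beta_{\alpha_i}$. Since $(\alpha_i,\beta_{\tau(i)}) \in \Gamma' \subseteq H(P_1,P_2)$ and $\beta_{\alpha_i} = \min\{\beta \in \mathbb{N}_0 \mbox{ ; } (\alpha_i,\beta) \in H(P_1,P_2)\}$, we obtain $\beta_{\alpha_i} \le \beta_{\tau(i)}$, that is, $\beta_{\sigma(i)} \le \beta_{\tau(i)}$. Because the gap sequence $\beta_1 < \beta_2 < \dots < \beta_g$ is strictly increasing, this forces $\sigma(i) \le \tau(i)$, and this holds for every $i \in \{1,\dots,g\}$.

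Next I would invoke the elementary fact that if two permutations $\sigma,\tau$ of $\{1,\dots,g\}$ satisfy $\sigma(i) \le \tau(i)$ for all $i$, then $\sigma = \tau$: summing over $i$ gives $\sum_{i=1}^{g}\sigma(i) = \sum_{i=1}^{g}\tau(i) = g(g+1)/2$, so the inequalities $\sigma(i) \le \tau(i)$ must all be equalities. Hence $\tau = \sigma$, and therefore $\Gamma' = \{(\alpha_i,\beta_{\tau(i)}) \mbox{ ; } i\} = \{(\alpha_i,\beta_{\sigma(i)}) \mbox{ ; } i\} = \Gamma(P_1,P_2)$.

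I do not expect a genuine obstacle here: the statement is essentially a rigidity/uniqueness fact, and the real content is already packaged into the hypothesis, which encodes that the second coordinates appearing in $\Gamma'$ are precisely the elements of $G(P_2)$, each occurring exactly once. The only steps requiring a little care are the identification $\beta_{\alpha_i} = \beta_{\sigma(i)}$ through \cite{kim}, and the observation that the $\alpha_i$ being pairwise distinct makes $\Gamma'$ a genuine graph of $g$ points, so that speaking of the permutation $\tau$ is legitimate.
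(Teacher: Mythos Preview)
Your argument is correct and is essentially the standard proof of this rigidity fact: from $(\alpha_i,\beta_{\tau(i)})\in H(P_1,P_2)$ and the minimality defining $\beta_{\alpha_i}=\beta_{\sigma(i)}$ you get $\sigma(i)\le\tau(i)$ for all $i$, and then the equal-sums trick forces $\sigma=\tau$. Note, however, that the present paper does not supply its own proof of this lemma; it is quoted verbatim from Homma \cite[Lemma~2]{homma2}, so there is no in-paper argument to compare against. Your write-up matches the original proof in \cite{homma2}.
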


\medskip

Compute $\Gamma (P_{1}, P_{2})$ allows to compute $H(P_{1}, P_{2})$ as follows. Given $\mathbf{x} = (\alpha_{1}, \beta_{1})$, $\mathbf{y} = (\alpha_{2}, \beta_{2}) \in \mathbb{N}_{0}^2$, the \textit{last upper bound} (or lub) of $\mathbf{x}$ and $\mathbf{y}$ is defined as $lub(\mathbf{x},\mathbf{y}):= (max\{\alpha_{1}, \alpha_{2}\}, max\{\beta_{1}, \beta_{2}\})$.

In \cite{kim}, we see that if $\mathbf{x},\mathbf{y} \in H(P_{1}, P_{2})$, then $lub(\mathbf{x},\mathbf{y}) \in H(P_{1}, P_{2})$. Moreover, we have the following results.

\medskip

\begin{lemma} \label{lemma 2.1 kim} \cite[Lemma 2.1]{kim}
Let $P_{1}$ and $P_{2}$ be two distinct rational points. For $(\alpha_1, \alpha_2) \in \mathbb{N}_{0}^2$ the following are equivalents:

a) $(\alpha_1, \alpha_2) \in H(P_1, P_2)$;

b) $\ell (\alpha_1 P_1 + \alpha_2 P_2) = \ell ((\alpha_1 -1)P_1 + \alpha_2 P_2) +1= \ell (\alpha_1 P_1 + (\alpha_2 -1)P_2) + 1$.
\end{lemma}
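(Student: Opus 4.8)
The plan is to argue entirely with Riemann--Roch spaces, using only the elementary bound that for any divisor $D$ on $\mathcal{X}$ and any rational point $P$ one has $\ell(D)\le\ell(D+P)\le\ell(D)+1$ --- the right-hand inequality because $\mathcal{L}(D)$ is the kernel of the ``leading coefficient at $P$'' map $\mathcal{L}(D+P)\to\mathbb{F}_q$. Write $V=\mathcal{L}(\alpha_1P_1+\alpha_2P_2)$, $V_1=\mathcal{L}((\alpha_1-1)P_1+\alpha_2P_2)$ and $V_2=\mathcal{L}(\alpha_1P_1+(\alpha_2-1)P_2)$, so that $V_1,V_2\subseteq V$, each of codimension $0$ or $1$. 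For (a)$\Rightarrow$(b) I would take $f$ with $(f)_\infty=\alpha_1P_1+\alpha_2P_2$; then $f\in V$, but since $f$ has pole order exactly $\alpha_1$ at $P_1$ it is not in $V_1$, so $\ell(V)>\ell(V_1)$, and combined with $\ell(V)\le\ell(V_1)+1$ this forces $\ell(V)=\ell(V_1)+1$. The same $f$ has pole order exactly $\alpha_2$ at $P_2$, so $f\notin V_2$ and likewise $\ell(V)=\ell(V_2)+1$, which is exactly (b).

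For (b)$\Rightarrow$(a), the hypothesis says $V_1$ and $V_2$ are proper subspaces of $V$ of codimension $1$. The key point is that a vector space can never be the union of two proper subspaces (if $V=V_1\cup V_2$ with $V_i$ subspaces, then $V=V_1$ or $V=V_2$), hence $V_1\cup V_2\subsetneq V$ and I may pick $f\in V\setminus(V_1\cup V_2)$. Being in $V$, the only poles of $f$ are at $P_1,P_2$ with $v_{P_i}(f)\ge-\alpha_i$; not being in $V_1$ forces $v_{P_1}(f)<-(\alpha_1-1)$, hence $v_{P_1}(f)=-\alpha_1$, and symmetrically $v_{P_2}(f)=-\alpha_2$. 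Therefore $(f)_\infty=\alpha_1P_1+\alpha_2P_2$, so $(\alpha_1,\alpha_2)\in H(P_1,P_2)$.

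I expect the only genuinely delicate step to be the one just used in (b)$\Rightarrow$(a): that the two codimension-one subspaces $V_1$ and $V_2$ cannot cover $V$, which is precisely what produces a function whose pole divisor is exactly $\alpha_1P_1+\alpha_2P_2$ rather than something supported on a single point. Everything else is routine bookkeeping with valuations together with the standard inequality $\ell(D+P)\le\ell(D)+1$. The only loose end is the degenerate case $\alpha_1=0$ (or $\alpha_2=0$): there, in the $\Rightarrow$ direction, if the chosen $f$ happens to vanish at $P_1$ one replaces it by $f+c$ for a suitable constant $c$, which leaves $(f)_\infty$ unchanged, and the argument then goes through verbatim.
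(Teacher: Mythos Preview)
Your argument is correct and is the standard Riemann--Roch proof of this fact. Note, however, that the paper does not supply its own proof of this lemma: it is quoted verbatim from \cite[Lemma~2.1]{kim} and used as a black box, so there is nothing in the paper to compare your proof against.
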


\medskip

Note that if $(\alpha_1, \alpha_2) \in G(P_1, P_2)$, then by lemma above either $\ell (\alpha_1 P_1 + \alpha_2 P_2) = \ell ((\alpha_1 -1)P_1 + \alpha_2 P_2) $ or $\ell (\alpha_1 P_1 + \alpha_2 P_2) = \ell (\alpha_1P_1 + (\alpha_2 -1) P_2) $.

\medskip

\begin{lemma} \label{lemma 2} \cite[Lemma 2.2]{kim}
Let $P_{1}$ and $P_{2}$ be two distinct rational points. Then $H(P_{1},P_{2}) = \{ lub (\mathbf{x},\mathbf{y}) \mbox{ : } \mathbf{x},\mathbf{y} \in \Gamma(P_{1}, P_{2}) \cup (H(P_{1}) \times \{0\}) \cup (\{0\} \cup H(P_{2})) \}$.
\end{lemma}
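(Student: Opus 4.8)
The inclusion "$\supseteq$" is the easy half. I would first recall from \cite{kim} that if $\mathbf{x},\mathbf{y}\in H(P_1,P_2)$ then $lub(\mathbf{x},\mathbf{y})\in H(P_1,P_2)$; this is already quoted in the excerpt. Hence it suffices to observe that each of the three generating sets $\Gamma(P_1,P_2)$, $H(P_1)\times\{0\}$ and $\{0\}\times H(P_2)$ is contained in $H(P_1,P_2)$. For $\Gamma(P_1,P_2)$ this holds by its very definition: $(\alpha_i,\beta_{\alpha_i})\in H(P_1,P_2)$ because $\beta_{\alpha_i}$ was chosen as the minimal second coordinate making the pair lie in the semigroup. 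For $H(P_1)\times\{0\}$ and $\{0\}\times H(P_2)$ it holds because a function with pole divisor $nP_1$ (resp. $nP_2$) has pole divisor $nP_1+0\cdot P_2$ (resp. $0\cdot P_1 + nP_2$). Taking $lub$'s of pairs of elements from the union then stays inside $H(P_1,P_2)$, giving "$\supseteq$".

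For the harder inclusion "$\subseteq$", fix $(\alpha_1,\alpha_2)\in H(P_1,P_2)$; I must exhibit $\mathbf{x},\mathbf{y}$ in the indicated union with $lub(\mathbf{x},\mathbf{y})=(\alpha_1,\alpha_2)$. The natural candidates are $\mathbf{x}=(\alpha_1,\beta_{\alpha_1})$ and $\mathbf{y}=(\alpha_2',\alpha_2)$ where $\alpha_2'$ is chosen minimal with $(\alpha_2',\alpha_2)\in H(P_1,P_2)$; by the symmetric version of the definition of $\beta$, such an $\alpha_2'$ exists and satisfies $\alpha_2'\le \alpha_1$. Then $\max\{\alpha_1,\alpha_2'\}=\alpha_1$, so I just need $\max\{\beta_{\alpha_1},\alpha_2\}=\alpha_2$, i.e. $\beta_{\alpha_1}\le\alpha_2$, which follows from minimality of $\beta_{\alpha_1}$ together with $(\alpha_1,\alpha_2)\in H(P_1,P_2)$. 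The remaining point is a case analysis on whether $\alpha_1$ (resp. $\alpha_2$) lies in $H(P_1)$ (resp. $H(P_2)$) or in the gap set $G(P_1)$ (resp. $G(P_2)$): if $\alpha_1\in H(P_1)$ then $\mathbf{x}$ can be replaced by $(\alpha_1,0)\in H(P_1)\times\{0\}$ (using $\beta_{\alpha_1}=0$ in that case, since $(\alpha_1,0)\in H(P_1,P_2)$ already); if $\alpha_1\in G(P_1)$ then $\beta_{\alpha_1}\in G(P_2)$ and $(\alpha_1,\beta_{\alpha_1})\in\Gamma(P_1,P_2)$. The symmetric discussion applies to $\mathbf{y}$ and $\alpha_2$. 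In every combination $\mathbf{x}$ and $\mathbf{y}$ land in the prescribed union, and $lub(\mathbf{x},\mathbf{y})=(\alpha_1,\alpha_2)$.

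The main obstacle is the bookkeeping in this case analysis, and in particular verifying the inequality $\beta_{\alpha_1}\le\alpha_2$ (and its counterpart) cleanly from Lemma \ref{lemma 2.1 kim}: one needs that membership $(\alpha_1,\alpha_2)\in H(P_1,P_2)$ forces the minimal $\beta$ at level $\alpha_1$ to be no larger than $\alpha_2$, which is immediate from the definition of $\beta_{\alpha_1}$ as a minimum over a set that contains $\alpha_2$. One should also double-check the edge cases $\alpha_1=0$ or $\alpha_2=0$, where one of $\mathbf{x},\mathbf{y}$ degenerates to $(0,0)$, which lies in both $H(P_1)\times\{0\}$ and $\{0\}\times H(P_2)$; these cause no trouble. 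Once the inequalities are in hand the proof is a short combinatorial argument, so I would present the $\supseteq$ direction in one line and devote the bulk of the write-up to organizing the four cases for the $\subseteq$ direction.
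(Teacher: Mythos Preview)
The paper does not supply its own proof of this lemma: it is quoted verbatim as \cite[Lemma~2.2]{kim} and used as a black box, so there is nothing in the paper to compare your argument against. That said, your sketch is the standard proof and is correct. The $\supseteq$ direction is exactly as you describe, and for $\subseteq$ the key observation is precisely that for any $(\alpha_1,\alpha_2)\in H(P_1,P_2)$ the minimality of $\beta_{\alpha_1}$ forces $\beta_{\alpha_1}\le\alpha_2$ (and symmetrically), after which the four-case split according to whether $\alpha_1\in H(P_1)$ or $G(P_1)$ and $\alpha_2\in H(P_2)$ or $G(P_2)$ places $\mathbf{x}$ and $\mathbf{y}$ in the required union. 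One small remark: in the case $\alpha_1\in H(P_1)$ you do not need to ``replace'' $\mathbf{x}$ by $(\alpha_1,0)$, since $\beta_{\alpha_1}=0$ already gives $\mathbf{x}=(\alpha_1,0)$; the phrasing could be tightened there.
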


\medskip

Then, by the lemma above for obtain the Weierstrass semigroup $H(P_{1},P_{2})$ is sufficient determine $\Gamma(P_{1}, P_{2})$. In this sense the set $\Gamma(P_1,P_2)$ is called \emph{minimal generating} of $H(P_1,P_2)$. For more details about Weierstrass semigroups theory see e.g. \cite{ballico} and \cite{carvalho}.

\subsection{AG codes}

Let $\mathcal{X}$ be as above, $\mathbb{F}_{q}(\mathcal{X})$ be the field of rational functions on $\mathcal{X}$ and $\Omega(\mathcal{X})$ be the space of differentials forms on $\mathcal{X}$.  For a divisor $G$ on $\mathcal{X}$, we consider the vector spaces $L(G):= \{ f \in \mathbb{F}_{q}(\mathcal{X}) \mbox{ ; } (f) + G \geq 0 \} \cup \{ 0 \}$ and $ \Omega(G) = \{ \omega \in \Omega(\mathcal{X}) \mbox{ ; } (\omega) \succeq G \} \cup \{0\}$, where $(f)$ and $(\omega)$ are the divisors of $f$ and $\omega$, respectively. Let $D=P_1 + \ldots + P_n$ be a divisor on $\mathcal{X}$ such that $P_i \neq P_j$ for $i \neq j$ and $supp(D) \cap supp(G) = \emptyset$. The AG codes $C_{L}(D,G)$ and $C_{\Omega}(D,G)$ are defined by

$$
C_{L}(D,G):= \{ (f(P_{1}), \ldots , f(P_{n})) \mbox{ ; } f \in L(G)\};
$$
$$
C_{\Omega}(D,G):= \{ (res_{P_{1}}(\omega), \ldots , res_{P_{n}}(\omega)) \mbox{ ; } \omega \in \Omega (G-D) \}.
$$

The AG codes $C_{L}(D,G)$ and $C_{\Omega}(D,G)$ are dual to each other. Is usual denote the AG code $C_{L}(D,G)$ simply by $C(D,G)$. The Riemann-Roch theorem makes it possible to estimate the parameters, length $n$, dimension $k$ and minimum distance $d$ of AG codes. In particular, if $2g-2 < deg(G) < n$, then $C(D,G)$ has dimension $k=deg(G) - g + 1$ and minimum distance $d \geq n - deg(G)$, see [\cite{vanlint2} , Theorem 10.6.3], and $C_{\Omega}(D,G)$ has dimension $k_{\Omega}= n - deg(G) + g - 1$, and minimum distance $d_{\Omega} \geq deg(G) - 2g + 2$, see [\cite{vanlint2} , Theorem 10.6.7]. In this sense, it is natural construct codes over curves with many rational points, hence the importance of the study of codes arising from maximal curves. We remember that a curve $\mathcal{X}$ of genus $g$ over $\mathbb{F}_{q}$ is a maximal curve if its number of $\mathbb{F}_{q}$-rational points is attains the Hasse-Weil upper bound, namely equals $2g\sqrt{q}+q+1$.

If $G=aQ$ for some rational point $Q$ on $\mathcal{X}$ and $D$ is the sum of all the other rational points on $\mathcal{X}$, then the codes $C(D,G)$ and $C_{\Omega}(D,G)$ are called \textit{one-point AG codes}. Analogously, if $G= a_{1}Q_{1} + a_{2}Q_{2}$, for two distinct rational points, then $C(D,G)$ and $C_{\Omega}(D,G)$ are called \textit{two-point AG codes}. For more details about coding theory see e.g., \cite{vanlint}, \cite{stichtenoth2} and \cite{vanlint2}.

The next result relate the Weierstrass gap set of a pair of points to the minimum distance of the corresponding two-point code.

\medskip

\begin{theorem} \label{theorem gretchen} \cite[Theorem 3.1]{gretchen}
Assume that $(a_{1} , a_{2} ) \in G(P_{1} , P_{2} )$ with $a_{1} \geq 1$ and $dim(L(a_{1} P_{1} + a_{2} P_{2} ) =
dim(L((a_{1}-1) P_{1} + a_{2} P_{2} )$. Suppose $(b_{1} , b_{2} - t - 1) \in G(P_{1} , P_{2} )$ for all $t$, $0 \leq t \leq
min \{ b_{2} - 1, 2g - 1 - (a_{1} + a_{2} ) \}$. Set $G = (a_{1} + b_{1} - 1)P_{1} + (a_{2} + b_{2} - 1)P_{2}$, and let
$D = Q_{1} + \cdots + Q_{n}$, where the $Q_{i}$ are distinct rational points, each not belonging
to the support of $G$. If the dimension of $C_{\Omega}(D, G)$ is positive, then the minimum
distance of this code is at least $deg (G) - 2g + 3$.
\end{theorem}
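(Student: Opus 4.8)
Since the Riemann--Roch (Goppa) bound already gives $d_{\Omega}\ge \deg G-2g+2$, the plan is to rule out a nonzero codeword of $C_{\Omega}(D,G)$ of weight exactly $w:=\deg G-2g+2$; that sharpens the bound by one and is all that is needed. So I would suppose $c\ne 0$ has weight $w$, let $T$ be its support so that $|T|=w$, and set $E_T:=\sum_{i\in T}Q_i$, noting that $\operatorname{supp}(E_T)\cap\{P_1,P_2\}=\emptyset$ because the points $Q_i$ avoid $\operatorname{supp}(G)$. Choose $\omega\in\Omega(G-D)$ representing $c$. The first move is a degree count: $v_{Q_i}(\omega)\ge -1$ for every $i$, and $v_{Q_i}(\omega)\ge 0$ for $i\notin T$ because $\operatorname{res}_{Q_i}(\omega)=0$ there; hence $(\omega)\ge G-E_T$, and since $\deg(\omega)=2g-2=\deg(G-E_T)$ we get the key equality $(\omega)=G-E_T$. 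In particular $v_{P_1}(\omega)=a_1+b_1-1$ and $v_{P_2}(\omega)=a_2+b_2-1$.

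Next I would bring in the first hypothesis through Riemann--Roch: $\dim L(a_1P_1+a_2P_2)=\dim L((a_1-1)P_1+a_2P_2)$ is equivalent to the strict inclusion $\Omega(a_1P_1+a_2P_2)\subsetneq\Omega((a_1-1)P_1+a_2P_2)$, so I may pick a differential $\eta\in\Omega((a_1-1)P_1+a_2P_2)\setminus\Omega(a_1P_1+a_2P_2)$; then $v_{P_1}(\eta)=a_1-1$ exactly, and $v_{P_2}(\eta)\ge a_2$, say $v_{P_2}(\eta)=a_2+s$ with $s\ge 0$. Writing $(\eta)=(a_1-1)P_1+(a_2+s)P_2+R$ with $R\ge 0$ supported away from $\{P_1,P_2\}$, the identity $\deg(\eta)=2g-2$ forces $0\le s\le 2g-1-(a_1+a_2)$, which is precisely the range of the parameter occurring in the second hypothesis. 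Now I would set $f:=\omega/\eta\in\mathbb{F}_q(\mathcal{X})^{\ast}$; subtracting divisors,
$$(f)=(\omega)-(\eta)=b_1P_1+(b_2-1-s)P_2-E_T-R,$$
so $1/f$ has divisor $E_T+R-b_1P_1-(b_2-1-s)P_2$, and because $E_T$ and $R$ are effective and avoid $P_1,P_2$, every pole of $1/f$ lies at $P_1$ or $P_2$.

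The argument then closes by a case split on $s$. If $0\le s\le b_2-1$, then $b_2-1-s\ge 0$ and $(1/f)_{\infty}=b_1P_1+(b_2-1-s)P_2$, so $(b_1,b_2-1-s)\in H(P_1,P_2)$; but $s\le\min\{b_2-1,\,2g-1-(a_1+a_2)\}$, so the second hypothesis, applied with $t=s$, says $(b_1,b_2-1-s)\in G(P_1,P_2)$ --- a contradiction. If instead $s\ge b_2$, then $s\le 2g-1-(a_1+a_2)$ gives $b_2-1\le 2g-1-(a_1+a_2)$, hence $\min\{b_2-1,\,2g-1-(a_1+a_2)\}=b_2-1$; moreover $b_2-1-s<0$, so $(1/f)_{\infty}=b_1P_1$ and $(b_1,0)\in H(P_1,P_2)$, contradicting the second hypothesis applied with $t=b_2-1$. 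Since both cases are impossible, $C_{\Omega}(D,G)$ has no codeword of weight $w$, and therefore $d_{\Omega}\ge \deg G-2g+3$.

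The two Riemann--Roch translations and the divisor arithmetic are routine; the step I expect to be the main obstacle is precisely the bookkeeping in the last two paragraphs --- one must be sure that $E_T$ and $R$ are genuinely disjoint from $\{P_1,P_2\}$ (this uses $Q_i\notin\operatorname{supp}(G)$ together with the fact that $v_{P_1}(\eta)=a_1-1$ and $v_{P_2}(\eta)=a_2+s$ are attained exactly), so that the pole divisor of $1/f$ is exactly $b_1P_1+\max\{0,b_2-1-s\}P_2$ --- and one must notice that the at-first-opaque parameter $t$ of the statement is nothing but $s=v_{P_2}(\eta)-a_2$, whose entire a priori range $0\le s\le 2g-1-(a_1+a_2)$ is exactly what the gap condition on $(b_1,b_2-t-1)$ has to cover. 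The purely degenerate situations (for instance $b_2=0$, where the second hypothesis is vacuous) do not occur in the applications and can be treated directly.
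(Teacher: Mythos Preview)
The paper does not give its own proof of this theorem: it is quoted verbatim as \cite[Theorem~3.1]{gretchen} and used as a black box in Section~4. So there is no in-paper argument to compare against.

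That said, your proof is correct and is essentially Matthews' original argument. The key steps --- forcing $(\omega)=G-E_T$ by a degree count, producing $\eta$ with $v_{P_1}(\eta)=a_1-1$ via the Riemann--Roch reformulation of the hypothesis $\ell(a_1P_1+a_2P_2)=\ell((a_1-1)P_1+a_2P_2)$, reading off $(1/f)_\infty$ from the divisor of $f=\omega/\eta$, and identifying the parameter $t$ with $s=v_{P_2}(\eta)-a_2$ --- are exactly the mechanism of the cited proof. Your bookkeeping on the disjointness of $E_T$ and $R$ from $\{P_1,P_2\}$ is accurate, and the case split on $s\le b_2-1$ versus $s\ge b_2$ closes cleanly. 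The only cosmetic point is that the edge cases you flag at the end (e.g.\ $b_2=0$, or $b_1=0$ in Case~2) are already excluded or handled by the hypotheses and the divisor equalities, so no separate treatment is really needed.
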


\medskip

Given a linear code over $\mathbb{F}_{q}$ with parameters $[n,k,d]$, the following proposition shows us how to get new codes with different $n$ and $k$.

\medskip

\begin{proposition} \label{proposition s} \cite[Exercise 7, (iii)]{tsfasman}
Is there is a linear code over $\mathbb{F}_{q}$ of length $n$, dimension $k$ and minimum distance $d$, then for each nonnegative integer $s < k$, there exists a linear code over $\mathbb{F}_{q}$ of length $n-s$, dimension $k-s$ and minimum distance $d$.
\end{proposition}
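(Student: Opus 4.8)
The plan is to prove Proposition \ref{proposition s} by an explicit puncturing/shortening argument on the generator matrix, or more transparently by iterating a single-step reduction. Since the claim is for every nonnegative integer $s < k$, it suffices to establish the case $s = 1$ and then apply it $s$ times, each time reducing length and dimension by one while preserving (or only increasing) the minimum distance; the hypothesis $s < k$ guarantees that at every stage the dimension stays positive so the construction can continue.

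For the single step, let $C$ be an $[n,k,d]$ code over $\mathbb{F}_q$ with generator matrix $M$. First I would observe that, since $d \geq 1$, there is some coordinate $j$ in which not all codewords vanish; after a coordinate permutation (which changes neither $k$ nor $d$) assume $j = n$. Next I would pass to a generator matrix in a convenient form: by row operations bring $M$ to a shape where exactly one row, say the last, has a nonzero entry in column $n$, and all other rows have a zero there. Concretely, pick a codeword $c$ with $c_n \neq 0$, include it as the last row of a basis, and subtract suitable multiples of $c$ from the remaining basis vectors to kill their last coordinate; this is possible because $\mathbb{F}_q$ is a field. Let $C'$ be the code of length $n-1$ spanned by the first $k-1$ of these modified rows with the last coordinate deleted (the shortened code). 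Then $C'$ has length $n-1$ and, since those $k-1$ rows are still linearly independent even after deleting a zero coordinate, dimension exactly $k-1$.

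It remains to check the minimum distance of $C'$ is at least $d$. Take a nonzero $c' \in C'$; it is obtained from a nonzero codeword $\tilde c \in C$ lying in the span of the first $k-1$ modified rows, hence $\tilde c$ has last coordinate $0$, so deleting that coordinate does not change the weight: $\mathrm{wt}(c') = \mathrm{wt}(\tilde c) \geq d$. Therefore $d(C') \geq d$, and combined with the dimension and length count we get an $[n-1, k-1, \geq d]$ code, which is the $s=1$ case. Iterating, after $s$ steps we obtain an $[n-s, k-s, \geq d]$ code; throwing away excess if one insists on distance exactly $d$ is unnecessary since the statement only asserts ``minimum distance $d$'' as a lower bound in the usual coding-theoretic sense (any $[n-s,k-s]$ code with minimum distance at least $d$ has minimum distance $d$ after possibly noting one cannot do better in general, but for our application only the lower bound matters).

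The main obstacle, such as it is, is purely bookkeeping: one must be careful that the $k-1$ modified basis vectors remain linearly independent after deleting the last coordinate, which is where the normalization (only the discarded row carries a nonzero last entry) is essential — without it, deleting a coordinate could drop the rank. Everything else is routine linear algebra over a field, and no genus or curve-theoretic input is needed; this proposition is used later only as a black box to trim the two-point AG codes constructed in Section 4 down to the exact length and dimension matching the MinT table entries.
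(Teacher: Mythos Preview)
Your argument is correct: this is precisely the standard shortening construction, and the inductive reduction to $s=1$ together with the generator-matrix normalization is exactly how one shows it. The paper itself does not prove this proposition at all --- it is merely quoted as \cite[Exercise 7, (iii)]{tsfasman} and used as a black box in Section~4 --- so there is nothing to compare against; your write-up would serve perfectly well as the omitted proof.
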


\subsection{The GK curves} Let $q=n^3$, where $n \geq 2$ is a prime power. The $GK$-curve over $\mathbb{F}_{q^2}$ is the curve of $\mathbb{P}^{3}(\overline{\mathbb{F}}_{q^2})$ with affine equations

\begin{equation} \label{equation GK}
\left\{ \begin{array}{c}
  Z^{n^2-n+1} = Y h(X) \\
  X^{n} + X = Y^{n+1}\;\;\;,
\end{array} \right.
\end{equation}

where $\displaystyle h(X) = \sum_{i=0}^{n} (-1)^{i+1} X^{i(n-1)}$. We will denote this curve simply by $GK$. The curve $GK$ is absolutely irreducible, nonsingular, has $n^8 - n^6 + n^5 + 1$ $\mathbb{F}_{q^2}$-rational points, a single point at infinity $P_{\infty}=(1:0:0:0)$ and its genus is $g = \frac{1}{2} (n^3 + 1)(n^2 - 1) + 1$. The $GK$ curve has an important properties as it lies on the Hermitian surface $\mathcal{H}_3$ with affine equation $X^{n^3} + X = Y^{n^3 + 1} + Z^{n^3+1}$; it is a maximal curve and, for $q>8$, $GK$ is the only know curve that is maximal but not $\mathbb{F}_{q^2}$-covered by the Hermitian curve $\mathcal{H}_2$ defined over $\mathbb{F}_{q^2}$ and its automorphism group $Aut(GK)$ has size $n^3(n^3+1)(n^2-1)(n^2-n+1)$ which turns out to be very large compared to the genus $g$.

Let $GK(\mathbb{F}_{q^2})$ be the set of $\mathbb{F}_{q^2}$-rational points of $GK$. We will denote a rational point $P=(a,b,c) \in GK(\mathbb{F}_{q^2})$ by $P_{(a,b,c)}$ whereas $P_0=(0,0,0)$. Since $P_{\infty}$ is the unique infinite point of $GK$ and the function field $\mathbb{F}_{q^2}(GK)$ is $\mathbb{F}_{q^2}(x,y,z)$ with $z^{n^2-n+1} = y h(x)$ and $x^n + x = y^{n+1}$ we have the next proposition.

\medskip

\begin{proposition}\label{divisors}
Let $x,y,z\in \mathbb{F}_{q^2}(x,y,z)$. Then\\

\noindent$(x) = (n^3 + 1)P_0 - (n^3+1)P_{\infty} ;$

\smallskip

\noindent$(y) = \displaystyle\sum_{i=1}^{n}(n^2 - n + 1)P_{(a_i,0,0)} - n(n^2-n+1)P_{\infty} \mbox{ ; } a_{i}^{n}+a_i=0 ;$

\smallskip

\noindent$(z) = \displaystyle\sum_{i=1}^{n^3}P_{(a_i,b_i,0)} - n^3 P_{\infty} \mbox{ ; } a_{i}^{n}+a_1=b_{i}^{n+1}$ $ \mbox{ with } a_i,b_i \in \mathbb{F}_{n^2}, \mbox{ } \forall i = 1, \ldots, n^3.$
\end{proposition}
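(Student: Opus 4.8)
The plan is to compute each principal divisor directly from the defining equations (\ref{equation GK}), using the fact that $GK$ has a single point at infinity and that the function field is $\mathbb{F}_{q^2}(x,y,z)$. Since $\deg(f) = 0$ for any principal divisor, in each case it suffices to locate the zeros of the function (with multiplicities) and then the pole at $P_\infty$ is forced to have the complementary order; so the real content is a local analysis at the affine points together with a count of total zero multiplicity.

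First I would treat $(x)$. The zeros of $x$ occur among the points with first coordinate $0$; from $x^n + x = y^{n+1}$ one gets $y^{n+1} = 0$, so $y = 0$, and then $z^{n^2-n+1} = y\,h(x) = 0$ forces $z = 0$. Hence $P_0 = (0,0,0)$ is the only affine zero of $x$. To get its multiplicity I would identify a local parameter at $P_0$: from the equations, $z$ is a natural candidate (or a suitable monomial in $x,y,z$), and one reads off $v_{P_0}(x)$, $v_{P_0}(y)$, $v_{P_0}(z)$ from the two relations $z^{n^2-n+1}=y\,h(x)$ with $h(0)=(-1)^{0+1}=-1 \neq 0$ (so $v_{P_0}(z^{n^2-n+1}) = v_{P_0}(y)$) and $x^n+x = y^{n+1}$ (so, since $v_{P_0}(x)\ge 1$, $v_{P_0}(x) = v_{P_0}(x^n+x) = (n+1)v_{P_0}(y)$). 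These two give $v_{P_0}(y) = n^2-n+1$ up to the choice that $z$ is uniformizing, hence $v_{P_0}(x) = (n+1)(n^2-n+1) = n^3+1$. Since that is the only affine zero, $(x) = (n^3+1)P_0 - (n^3+1)P_\infty$, and the claimed degree of the pole divisor is consistent.

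Next, $(y)$: the affine zeros of $y$ are the points with $y=0$, which by $x^n + x = y^{n+1} = 0$ are exactly the points $P_{(a_i,0,0)}$ with $a_i^n + a_i = 0$ (there are $n$ such $a_i \in \mathbb{F}_{n^2}$, all distinct since $x^n+x$ is separable), and at each such point $z=0$ as well. At $P_{(a_i,0,0)}$ the relation $x^n+x = y^{n+1}$ shows $x - a_i$ has the same valuation as $y^{n+1}$ (as $x^n+x$ has a simple zero at $x=a_i$), and $z^{n^2-n+1} = y\,h(x)$ with $h(a_i) \neq 0$ (this needs checking, but follows because $h(X)$ divides $X^n+X$ only at $X=0$-type factors — more precisely $h(X) = (X^{n(n-1)}+\cdots)$ has no root in common with $X^n+X = X(X^{n-1}+1)$, as one verifies that $h(\zeta) = n \neq 0$ for $\zeta^{n-1}=-1$), so $v_{P_{(a_i,0,0)}}(z^{n^2-n+1}) = v_{P_{(a_i,0,0)}}(y)$; taking $z$ as local parameter gives $v_{P_{(a_i,0,0)}}(y) = n^2-n+1$. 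Summing over the $n$ zeros gives total zero order $n(n^2-n+1)$, hence the pole order at $P_\infty$, as claimed.

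Finally, $(z)$: the affine zeros of $z$ are the points with $z = 0$, i.e. $y\,h(x) = 0$. I would argue that on $GK(\overline{\mathbb{F}}_{q^2})$ the points with $z=0$ are exactly the $n^3$ points $P_{(a_i,b_i,0)}$ with $a_i^n+a_i = b_i^{n+1}$: this is the intersection of $GK$ with the plane $Z=0$, which is the curve $X^n+X = Y^{n+1}$ in the $(X,Y)$-plane, a (smooth, by separability of $X^n+X$) Hermitian-type curve with exactly $n^3$ affine points over $\mathbb{F}_{n^2}$ plus $P_\infty$; and each of these intersection points is simple, so $v_P(z) = 1$ at each (the cases $y=0$ among them, namely $P_{(a_i,0,0)}$, are included since there $h(a_i)\ne 0$ so $z$ still vanishes only to order coming from the plane section). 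Then $\sum_{i=1}^{n^3} P_{(a_i,b_i,0)}$ has degree $n^3$, forcing $(z) = \sum_{i=1}^{n^3} P_{(a_i,b_i,0)} - n^3 P_\infty$.

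I expect the main obstacle to be the local valuation bookkeeping at the special points — in particular (i) pinning down which of $x,y,z$ (or which monomial) is a uniformizer at $P_0$ and at the $P_{(a_i,0,0)}$, and deducing the other valuations consistently from the two defining relations; and (ii) verifying that $h(X)$ and $X^n+X$ share no roots, i.e. that $h$ does not vanish at the points where $y=0$, which is what makes $v(z)$ come out as claimed at those points. Both are finite checks: (ii) reduces to showing $h(\zeta) \neq 0$ whenever $\zeta^{n-1} = -1$, which follows from the geometric-series identity $h(X) = \dfrac{X^{n(n-1)} + (-1)^{n+1}}{X^{n-1}+1}$ (valid since $h(X)(X^{n-1}+1) = \sum_{i=0}^{n}(-1)^{i+1}(X^{(i+1)(n-1)} + X^{i(n-1)})$ telescopes), giving $h(\zeta)(\,0\,) = \zeta^{n(n-1)} + (-1)^{n+1}$ in the limit — handled by L'Hôpital / derivative, yielding $h(\zeta) = \pm n \neq 0$ in characteristic coprime to $n$. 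Once these local facts are in hand, the degree-zero constraint on principal divisors closes each computation immediately.
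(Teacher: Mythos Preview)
The paper does not actually prove this proposition: it is stated immediately after the sentence ``Since $P_{\infty}$ is the unique infinite point of $GK$ and the function field $\mathbb{F}_{q^2}(GK)$ is $\mathbb{F}_{q^2}(x,y,z)$ with $z^{n^2-n+1} = y h(x)$ and $x^n + x = y^{n+1}$ we have the next proposition,'' and no argument follows. Your proposal is therefore not competing with a paper proof but supplying one, and the route you take---identify the affine zeros from the two defining relations, compute local valuations by choosing $z$ as uniformizer at each $P_{(a,b,0)}$, and let the degree-zero condition fix the pole order at $P_\infty$---is exactly the standard computation the authors are implicitly invoking. Viewed as the map $(x,y,z)\mapsto(x,y)$ exhibiting $GK$ as a totally ramified degree-$(n^2-n+1)$ cover of the Hermitian curve $X^n+X=Y^{n+1}$ over each $(a,b)$ with $b\,h(a)=0$ and over the infinite point, your valuation bookkeeping is correct and recovers all three divisors.

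There is one genuine slip in your side computation. You write ``$h(\zeta)=\pm n\neq 0$ in characteristic coprime to $n$,'' but here $n$ is a prime power and the characteristic \emph{divides} $n$, so that reasoning would fail. The direct evaluation is cleaner: for $\zeta^{n-1}=-1$ one has $\zeta^{i(n-1)}=(-1)^i$, hence
\[
h(\zeta)=\sum_{i=0}^{n}(-1)^{i+1}(-1)^i=\sum_{i=0}^{n}(-1)=-(n+1),
\]
which equals $-1$ in characteristic $p\mid n$ and is therefore nonzero. (Your telescoping identity for $h(X)(X^{n-1}+1)$ also has the wrong exponent; the product is $(-1)^{n+1}X^{\,n^2-1}-1$, not $X^{n(n-1)}+(-1)^{n+1}$, though you do not actually need it.) With this correction the coprimality of $h(X)$ and $X^n+X$ is established, and the rest of your argument goes through.
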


\medskip

The next proposition give us the Weierstrass semigroup at certain points on $GK$ and the following theorem assures us that such points are in the same orbit.

\medskip

\begin{proposition} \label{proposition H(P)} \cite[Proposition 3.1]{GKcodes}
$H(P_{\infty}) = H(P_{(a,b,0)}) = \langle n^3 - n^2 + n , n^3 , n^3 + 1 \rangle$.
\end{proposition}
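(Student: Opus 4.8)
The plan is to realize $H(P_\infty)$ as the exact set of pole orders at $P_\infty$ of the monomials in $x,y,z$, and then to check that this set coincides with the three-generated semigroup. First I would use that $P_\infty$ is the only point of $GK$ lying off the affine chart with coordinates $x,y,z$ and that $GK$ is nonsingular to conclude that the ring of functions in $\mathbb{F}_{q^2}(GK)$ regular outside $P_\infty$ is the affine coordinate ring $R=\mathbb{F}_{q^2}[x,y,z]/\bigl(z^{n^2-n+1}-yh(x),\,y^{n+1}-x^n-x\bigr)$. Since $y^{n+1}-x^n-x$ is monic of degree $n+1$ in $y$ over $\mathbb{F}_{q^2}[x]$, and $z^{n^2-n+1}-yh(x)$ is monic of degree $n^2-n+1$ in $z$ over $\mathbb{F}_{q^2}[x,y]/(y^{n+1}-x^n-x)$, the ring $R$ is a free $\mathbb{F}_{q^2}[x]$-module with basis $\{\,y^jz^k:0\le j\le n,\ 0\le k\le n^2-n\,\}$; hence every nonzero $f\in R$ is an $\mathbb{F}_{q^2}$-linear combination of monomials $x^iy^jz^k$ with $i\ge 0$, $0\le j\le n$ and $0\le k\le n^2-n$.

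By Proposition \ref{divisors} one has $-v_\infty(x)=n^3+1$, $-v_\infty(y)=n^3-n^2+n$ and $-v_\infty(z)=n^3$, so $x^iy^jz^k$ has pole order $\omega(i,j,k):=i(n^3+1)+j(n^3-n^2+n)+kn^3$ at $P_\infty$. The crucial step is to show that the integers $\omega(i,j,k)$ are pairwise distinct on the index set above. Writing $\omega(i,j,k)=(i+j+k)n^3+i-j(n^2-n)$ and setting $t=(i-i')+(j-j')+(k-k')$, an equality $\omega(i,j,k)=\omega(i',j',k')$ yields $k-k'=t(n^3+1)-(j-j')(n^2-n+1)$; since $|k-k'|\le n^2-n$ and $|j-j'|\le n$, a nonzero $t$ would force $|t|(n^3+1)\le (n^2-n)+n(n^2-n+1)=n^3$, which is impossible, so $t=0$, and then $k-k'=-(j-j')(n^2-n+1)$ with $|k-k'|<n^2-n+1$ forces $j=j'$, hence $k=k'$ and finally $i=i'$. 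Consequently the pole order of any nonzero $f\in R$ equals that of its leading monomial, and therefore $H(P_\infty)=\{\,\omega(i,j,k):i\ge 0,\ 0\le j\le n,\ 0\le k\le n^2-n\,\}$.

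Next I would identify this set with $S:=\langle n^3-n^2+n,\,n^3,\,n^3+1\rangle$. The inclusion $H(P_\infty)\subseteq S$ is immediate. For the reverse inclusion, reading the relations $y^{n+1}=x^n+x$ and $z^{n^2-n+1}=yh(x)$ on pole orders gives the numerical identities $(n+1)(n^3-n^2+n)=n(n^3+1)$ and $(n^2-n+1)n^3=(n^3-n^2+n)+(n^2-n)(n^3+1)$; using these, any representation of an element of $S$ can be rewritten so that the coefficient of $n^3-n^2+n$ is at most $n$ and the coefficient of $n^3$ is at most $n^2-n$, whence $S\subseteq H(P_\infty)$. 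Finally, $H(P_{(a,b,0)})=H(P_\infty)$ follows because $P_\infty$ and the points $P_{(a,b,0)}$ form a single orbit under $\mathrm{Aut}(GK)$ — the content of the theorem stated next, which rests on the description of $\mathrm{Aut}(GK)$ by Giulietti and Korchm\'{a}ros — and the Weierstrass semigroup is constant along such an orbit.

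I expect the main obstacle to be the distinctness of the values $\omega(i,j,k)$: once it is in place the leading-monomial argument is automatic, but it requires the elementary (yet slightly delicate) inequality bookkeeping above, and it is in essence equivalent to the fact that $S$ has exactly $g=\tfrac12(n^3+1)(n^2-1)+1$ gaps. A purely semigroup-theoretic alternative — computing the number of gaps of $S$ directly from the Frobenius-number formulas for three-generated numerical semigroups and comparing with $g$ — is available but messier, so I would favour the coordinate-ring approach above.
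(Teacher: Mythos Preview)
The paper does not prove this proposition at all: it is quoted verbatim from \cite[Proposition~3.1]{GKcodes} and used as input. So there is no ``paper's own proof'' to compare against; your write-up is a self-contained argument for a cited result.

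That said, your argument is sound. The key technical point---that the pole orders $\omega(i,j,k)=i(n^3+1)+j(n^3-n^2+n)+kn^3$ are pairwise distinct on $i\ge 0$, $0\le j\le n$, $0\le k\le n^2-n$---is handled cleanly: your derivation $k-k'=t(n^3+1)-(j-j')(n^2-n+1)$ with $t=(i-i')+(j-j')+(k-k')$ is correct, and the bound $|t|(n^3+1)\le (n^2-n)+n(n^2-n+1)=n^3$ indeed forces $t=0$, after which $|k-k'|<n^2-n+1$ gives $j=j'$ and the rest follows. The identification of $\bigcup_{P\ne P_\infty}\mathcal{O}_P$ with the affine coordinate ring uses nonsingularity (so that $R$ is integrally closed), which the paper records. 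One cosmetic point: you invoke the orbit description (the theorem stated immediately after this proposition) to transport $H(P_\infty)$ to $H(P_{(a,b,0)})$; that theorem is likewise imported from \cite{GKcodes} and does not depend on the present proposition, so there is no circularity, but it is a forward reference in the paper's ordering.

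Your approach is the standard ``integral basis plus distinct pole orders'' computation; the original proof in \cite{GKcodes} proceeds along the same lines (exhibiting $x,y,z$ as functions with the three generating pole orders and counting gaps against $g$). The semigroup-theoretic alternative you mention at the end would also work but, as you note, is less transparent here.
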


\medskip

\begin{theorem} \label{theorem bitransitive} \cite[Theorem 3.4]{GKcodes}
The set of $\mathbb{F}_{q^2}$-rational points of $GK$ splits into two orbits under the action of $Aut(GK)$. One orbit, say $\mathcal{O}_1$, has size $n^3+1$ and consists of the points $P_{(a,b,0)} \in GK(\mathbb{F}_{q^2})$ together with the infinite point $P_{\infty}$. The other orbit has size $n^3(n^3+1)(n^2-1)$ and consists of the points $P_{(a,b,c)} \in GK(\mathbb{F}_{q^2})$ with $c \neq 0$. Furthermore, $Aut(GK)$ acts on $\mathcal{O}_1$ as $PGU(3,n)$ in its doubly transitive permutation representation.
\end{theorem}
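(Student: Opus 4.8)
My plan is to deduce the orbit structure from the realization of $GK$ as a cyclic cover of the Hermitian curve. Put $m=n^{2}-n+1$ and let $\pi\colon GK\to\mathcal{H}$, $(x,y,z)\mapsto(x,y)$, be the projection onto the Hermitian curve $\mathcal{H}\colon X^{n}+X=Y^{n+1}$ over $\mathbb{F}_{n^{2}}$; by the defining equations, $\mathbb{F}_{q^{2}}(GK)$ is the Kummer extension $z^{m}=t$, with $t=y\,h(x)$, of $\mathbb{F}_{q^{2}}(\mathcal{H})$. First I would read off $\mathrm{div}(t)$ on $\mathcal{H}$ from Proposition~\ref{divisors} together with the relation $z^{m}=yh(x)$: its support is exactly $\mathcal{H}(\mathbb{F}_{n^{2}})$, the $n^{3}+1$ rational points of $\mathcal{H}$, and every multiplicity is prime to $m$ — they equal $\pm 1$ except at the point at infinity, where the multiplicity is $\pm n^{3}$ and $n^{3}\equiv -1\pmod{m}$ since $n^{3}+1=(n+1)m$. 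Hence $\pi$ is totally ramified over $\mathcal{H}(\mathbb{F}_{n^{2}})$ and unramified elsewhere, and its ramification locus on $GK$ is exactly $\mathcal{O}_{1}=\{P_{\infty}\}\cup\{P_{(a,b,0)}\}$, a set of $n^{3}+1$ points.

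Next I would use two subgroups of $Aut(GK)$: the deck group $C\cong C_{m}$ of $\pi$, that is $(x,y,z)\mapsto(x,y,\zeta z)$ with $\zeta^{m}=1$ (these $\zeta$ lie in $\mathbb{F}_{q^{2}}$ because $m\mid q^{2}-1=(n^{2}-1)(n^{2}+n+1)m$), which fixes $\mathcal{O}_{1}$ pointwise and acts freely on $GK\setminus\mathcal{O}_{1}$; and a lift of $Aut(\mathcal{H})=PGU(3,n)$ to $Aut(GK)$, which exists because the branch divisor $\mathcal{H}(\mathbb{F}_{n^{2}})$ is stable under $Aut(\mathcal{H})$ — this lifting is exactly the ingredient contained in the Giulietti--Korchm\'aros determination of $Aut(GK)$ invoked in \cite{GKcodes}. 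Since the known order $|Aut(GK)|=n^{3}(n^{3}+1)(n^{2}-1)m$ factors as $|PGU(3,n)|\cdot|C|$, the image of the action $Aut(GK)\to\mathrm{Sym}(\mathcal{O}_{1})$ has order at most $|Aut(GK)|/|C|=|PGU(3,n)|$ (as $C$ is in the kernel) and at least $|PGU(3,n)|$ (the lifted $PGU(3,n)$ acts on $\mathcal{O}_{1}$ exactly as $PGU(3,n)$ acts on $\mathcal{H}(\mathbb{F}_{n^{2}})$, transported by the bijection $\pi|_{\mathcal{O}_{1}}$). Therefore the kernel equals $C$ and $Aut(GK)/C\cong PGU(3,n)$ acts on $\mathcal{O}_{1}$ as the classical doubly transitive representation; in particular $\mathcal{O}_{1}$ is a single orbit of size $n^{3}+1$, which settles the first and the last assertions.

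For $T:=GK(\mathbb{F}_{q^{2}})\setminus\mathcal{O}_{1}$ — the rational points with $z\neq 0$, lying over $\mathcal{H}(\mathbb{F}_{q^{2}})\setminus\mathcal{H}(\mathbb{F}_{n^{2}})$ — counting gives $|T|=(n^{8}-n^{6}+n^{5}+1)-(n^{3}+1)=n^{3}(n^{3}+1)(n^{2}-1)$. Because $\mathcal{H}$ is maximal over $\mathbb{F}_{n^{2}}$ all its Frobenius eigenvalues equal $-n$, so $|\mathcal{H}(\mathbb{F}_{q^{2}})|=n^{6}+n^{5}-n^{4}+1$ and $|\mathcal{H}(\mathbb{F}_{q^{2}})\setminus\mathcal{H}(\mathbb{F}_{n^{2}})|=n^{3}(n-1)(n+1)^{2}=|T|/m$. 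Since $C$ acts freely on $T$ with orbits the $\pi$-fibres, this forces every point of $\mathcal{H}(\mathbb{F}_{q^{2}})\setminus\mathcal{H}(\mathbb{F}_{n^{2}})$ to split completely in $GK$, and identifies the set of these $C$-orbits, as a $PGU(3,n)$-set, with $\mathcal{H}(\mathbb{F}_{q^{2}})\setminus\mathcal{H}(\mathbb{F}_{n^{2}})$. Hence $Aut(GK)$ is transitive on $T$ as soon as $PGU(3,n)=Aut(\mathcal{H})$ is transitive on $\mathcal{H}(\mathbb{F}_{q^{2}})\setminus\mathcal{H}(\mathbb{F}_{n^{2}})$: given $P,P'\in T$ I would move $\pi(P)$ to $\pi(P')$ by some $\phi\in PGU(3,n)$, lift $\phi$ to $\tilde\phi\in Aut(GK)$, and then correct by the unique element of $C$ carrying $\tilde\phi(P)$ to $P'$ inside their common fibre. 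Then $T$ is the second orbit, of size $n^{3}(n^{3}+1)(n^{2}-1)$.

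I expect the main obstacle to be precisely this last input — transitivity of $PGU(3,n)$ on $\mathcal{H}(\mathbb{F}_{q^{2}})\setminus\mathcal{H}(\mathbb{F}_{n^{2}})$, equivalently that a point of $\mathcal{H}$ rational over $\mathbb{F}_{n^{6}}$ but not over $\mathbb{F}_{n^{2}}$ has $PGU(3,n)$-stabilizer cyclic of order $m$, contained in a Singer-type subgroup. This statement, together with the lifting of $Aut(\mathcal{H})$ to $Aut(GK)$ (which I would cite from \cite{GKcodes} and the Giulietti--Korchm\'aros construction), is where the genuine work lies; everything else above is elementary once the order of $Aut(GK)$ and the divisors in Proposition~\ref{divisors} are in hand.
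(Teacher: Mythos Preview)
The paper does not prove this theorem at all: it is quoted from \cite[Theorem~3.4]{GKcodes} and used as a black box, so there is no proof in the paper to compare against.

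That said, your outline is essentially the argument underlying the Giulietti--Korchm\'aros description of $Aut(GK)$ and its restatement in \cite{GKcodes}: realize $GK$ as the cyclic degree-$m$ Kummer cover of the Hermitian curve $\mathcal{H}$, identify the ramification locus with $\mathcal{O}_1$, lift $PGU(3,n)=Aut(\mathcal{H})$ through $\pi$, and combine it with the deck group $C_m$. Your divisor and cardinality computations are correct, including $|T|/m=n^{3}(n-1)(n+1)^{2}=|\mathcal{H}(\mathbb{F}_{n^{6}})\setminus\mathcal{H}(\mathbb{F}_{n^{2}})|$. You have also correctly isolated the two non-elementary inputs you would need to cite: (i) that $PGU(3,n)$ actually lifts to a subgroup of $Aut(GK)$ compatible with $\pi$, and (ii) that $PGU(3,n)$ is transitive on $\mathcal{H}(\mathbb{F}_{n^{6}})\setminus\mathcal{H}(\mathbb{F}_{n^{2}})$. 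Both facts are contained in \cite{GK} and \cite{GKcodes}, so as a plan your argument is sound; the paper itself, however, simply invokes the cited result without any of this.
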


\medskip

For more details about this curve, see \cite{GK}.

\section{The Weierstrass semigroup $H(P_1,P_2)$ at a certain pairs of points on $GK$}

In this section we will determine the Weierstrass semigroup $H(P_1, P_2)$ for certain pairs of points on the curve $GK$. We will concentrate our results in the case $P_1=P_0$ and $P_2 = P_{\infty}$ but, by Theorem \ref{theorem bitransitive}, the results also continue to be valid for any points $P_{(a,b,0)} \in GK(\mathbb{F}_{q^2})$. That is, we can exchange the points $P_0$ and $P_{\infty}$ for any points on the orbit $\mathcal{O}_1$ given in the Theorem \ref{theorem bitransitive}.

First, let's consider the case where $n=2$ in Equation (\ref{equation GK}), that is, consider the curve $GK$ with affine equations

\begin{equation} \label{equation GK n 2}
\left\{ \begin{array}{c}
  Z^{3} = Y (1+X+X^2) \\
  X^{2} + X = Y^{3}
\end{array} \right.
\end{equation}

In this case, the genus $g=10$ and, by Proposition \ref{proposition H(P)}, $H(P_0)=H(P_{\infty}) = \langle 6,8,9 \rangle$ and then $G(P_{0}) = G(P_{\infty}) = \{ 1,2,3,4,5,7,10,11,13,19 \}$. Let
$$
T= \left \{ \dfrac{y^2}{x}, \dfrac{yz}{x}, \dfrac{z^2}{x}, \dfrac{y^2z}{x}, \dfrac{yz^2}{x}, \dfrac{y^2z^2}{x}, \dfrac{y^2z}{x^2}, \dfrac{yz^2}{x^2}, \dfrac{y^2z^2}{x^2}, \dfrac{y^2z^2}{x^3} \right \}.
$$

Then $|T|=10$ and we have

\medskip

$\bullet$ $\left ( \dfrac{y^2}{x} \right )_{\infty} = 3P_{0} + 3P_{\infty}$

\medskip

$\bullet$ $\left ( \dfrac{yz}{x} \right )_{\infty} = 5P_{0} + 5P_{\infty}$

\medskip

$\bullet$ $\left ( \dfrac{z^2}{x} \right )_{\infty} = 7P_{0} + 7P_{\infty}$

\medskip

$\bullet$ $\left ( \dfrac{y^2z}{x} \right )_{\infty} = 2P_{0} + 11P_{\infty}$

\medskip

$\bullet$ $\left ( \dfrac{yz^2}{x} \right )_{\infty} = 4P_{0} + 13P_{\infty}$

\medskip

$\bullet$ $\left ( \dfrac{y^2z^2}{x} \right )_{\infty} = P_{0} + 19P_{\infty}$

\medskip

$\bullet$ $\left ( \dfrac{y^2z}{x^2} \right )_{\infty} = 11P_{0} + 2P_{\infty}$

\medskip

$\bullet$ $\left ( \dfrac{yz^2}{x^2} \right )_{\infty} = 13P_{0} + 4P_{\infty}$

\medskip

$\bullet$ $\left ( \dfrac{y^2z^2}{x^2} \right )_{\infty} = 10P_{0} + 10P_{\infty}$

\medskip

$\bullet$ $\left ( \dfrac{y^2z^2}{x^3} \right )_{\infty} = 19P_{0} + P_{\infty}$

\medskip

Let $\Gamma ' = \{ (3,3), (5,5), (7,7), (2,11), (4,13), (1,19), $ $(11,2), (13,4), (10,10), (19,1) \}$. Then, $\Gamma ' \subset G(P_0) \times G(P_{\infty})$ and by Lemma \ref{lemma 1} follows that $\Gamma ' = \Gamma(P_0, P_{\infty})$.

Therefore, for $n=2$ we get the Weierstrass semigroup $H(P_0, P_{\infty})$. The Figure 1 depicts $H(P_0, P_\infty)\cap A^2$, where $A$ denotes the set of non-negative integers less than $2g+1$.

\begin{figure}[h!b]
\centering
\includegraphics[scale=0.9]{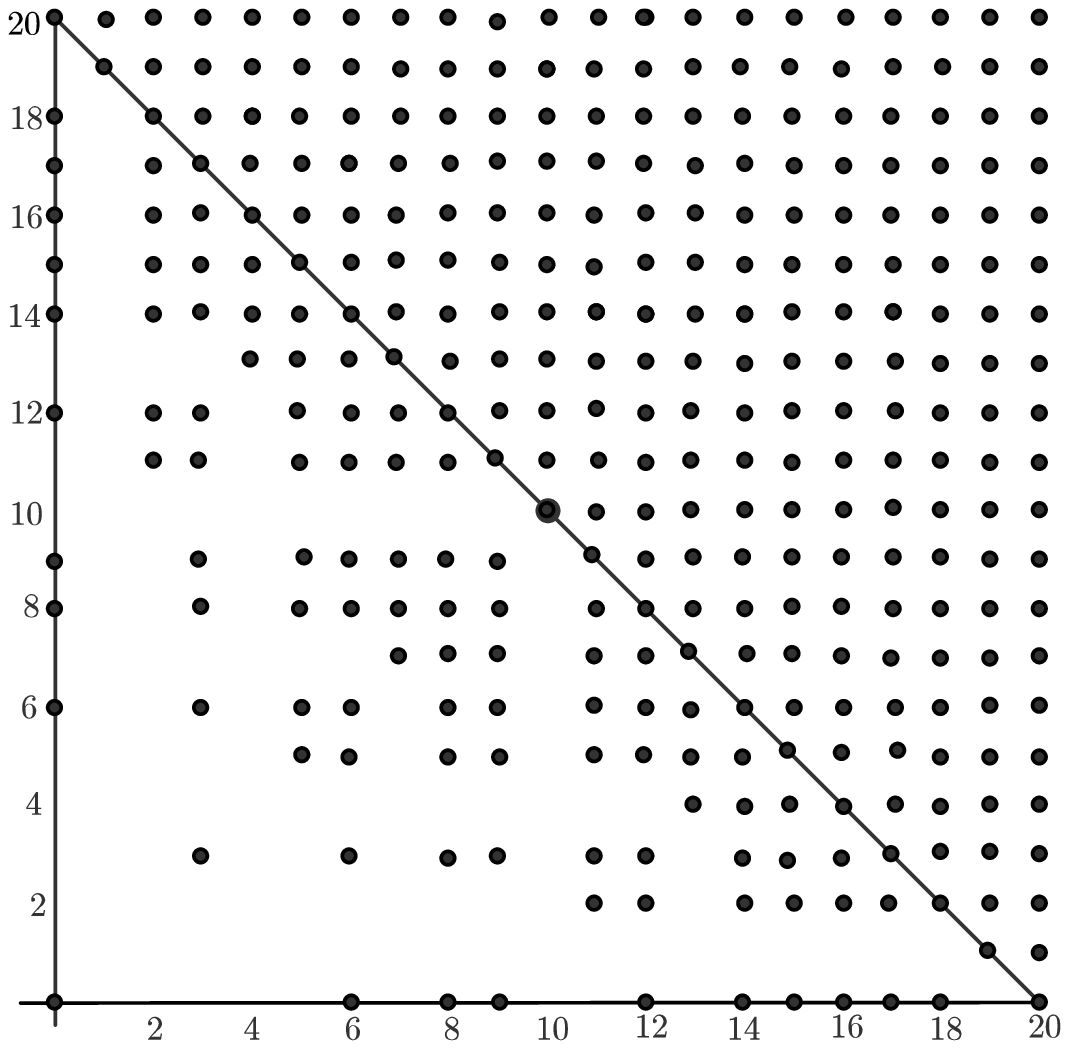}
\caption{}
\end{figure}

Now, we will determine $H(P_0, P_{\infty})$ in the case $n\geq 3$.

\medskip

\begin{lemma} \label{lemma functions}
Let $g$ be the genus of the $GK$ curve. Let $n \geq 3$ and consider the following $n^2-1$ sets of functions.\\

For $1 \leq k \leq n-1$:\\

\begin{small}
$T_k=\left\{ \dfrac{y^i z^j}{x^k} \mbox{ ; } \begin{array}{l}
 0 \leq i \leq k, \mbox{  } j \in \{ k-i+1, \ldots , n^2 - n \} \mbox{ and}\\
  k+1 \leq i \leq n, \mbox{ } j \in \{ 0,1,\ldots,n^2 - n \}
\end{array}\right \}$
\end{small}
\\

For $n \leq k \leq n^2 - n - 2$:\\

$T_k=\left\{ \dfrac{y^i z^j}{x^k} \mbox{ ; } \begin{array}{l}
0 \leq i \leq n, \mbox{  } j \in \{ k-i+1, \ldots , n^2 - n \}
\end{array} \right \}$
\\ \\

For $n^2-n-1 \leq k \leq n^2 - 1$:\\

$T_k=\left\{ \dfrac{y^i z^j}{x^k} \mbox{ ; } \begin{array}{l}
k-(n^2-1) + n \leq i \leq n, \\ j \in \{ k-i+1, \ldots , n^2 - n \}
\end{array} \right \}.$

\medskip

Let $\displaystyle T = \bigcup_{k=1}^{n^2-1} T_k$. Then, $|T|=g$.
\end{lemma}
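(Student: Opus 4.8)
The plan is to compute $|T|$ by a direct count of the triples $(i,j,k)$ parametrizing the functions $y^iz^j/x^k$, organized according to the three regimes of $k$ that appear in the statement, and then to verify that the total equals $g = \tfrac12(n^3+1)(n^2-1)+1$. First I would fix $k$ and count $|T_k|$ in each range. For $1 \le k \le n-1$, the set $T_k$ splits into two blocks: the ``low $i$'' block with $0 \le i \le k$ and $j$ ranging over $\{k-i+1,\dots,n^2-n\}$, contributing $\sum_{i=0}^{k}(n^2-n-(k-i+1)+1) = \sum_{i=0}^{k}(n^2-n-k+i)$, and the ``high $i$'' block with $k+1 \le i \le n$ and $j \in \{0,\dots,n^2-n\}$, contributing $(n-k)(n^2-n+1)$. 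For $n \le k \le n^2-n-2$, one gets $|T_k| = \sum_{i=0}^{n}\bigl(n^2-n-(k-i+1)+1\bigr) = \sum_{i=0}^{n}(n^2-n-k+i)$, valid as long as every summand is nonnegative (which is why the range stops at $k = n^2-n-2$, forcing $i=n$ to still give $j$-interval length at least $1$). For $n^2-n-1 \le k \le n^2-1$, the index $i$ starts at $k-(n^2-1)+n$ and runs to $n$, with $j \in \{k-i+1,\dots,n^2-n\}$, giving $|T_k| = \sum_{i=k-(n^2-1)+n}^{n}(n^2-n-k+i)$.

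Next I would sum these expressions over the relevant ranges of $k$ and add the three partial totals. Each partial total is a polynomial in $n$ obtained from standard formulas for $\sum k$ and $\sum k^2$; after expansion and collection of terms the grand total should simplify to $\tfrac12(n^3+1)(n^2-1)+1$. I would also check the edge cases: for $n=3$ the middle range $n \le k \le n^2-n-2$ is $3 \le k \le 4$ and the third range is $5 \le k \le 8$, so one can cross-check the formula against an explicit small computation. It is worth noting that the functions in $T$ are built so that $\operatorname{div}_\infty(y^iz^j/x^k)$ involves only $P_0$ and $P_\infty$ (using Proposition \ref{divisors}), and the index constraints are exactly those ensuring that $y^iz^j/x^k \in L(mP_0 + \ell P_\infty)$ is a ``genuinely new'' function — i.e.\ its pole orders at $P_0$ and $P_\infty$ realize a gap pair; but for the present lemma only the cardinality count is needed.

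The main obstacle is purely bookkeeping: making sure the three ranges of $k$ tile $\{1,\dots,n^2-1\}$ without overlap or omission, and that the inner summation limits are correct at the boundaries $k=n$ and $k=n^2-n-1$ where the shape of $T_k$ changes. In particular one must confirm that at $k=n-1$ the ``high $i$'' block is $i=n$ only (consistent with the middle-range formula having no such block because there $j$ can already reach down appropriately), and that at $k=n^2-n-1$ the lower bound $i \ge k-(n^2-1)+n = 0$ just barely becomes active. Once the ranges are seen to partition $\{1,\dots,n^2-1\}$ correctly, the algebra is routine and the identity $|T| = g$ falls out.
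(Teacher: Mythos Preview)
Your approach is essentially identical to the paper's: it too splits into the three $k$-ranges, computes $|T_k|$ as the very sums you wrote down, records the partial totals $S_1,S_2,S_3$, and adds them to get $\tfrac12(n^5-2n^3+n^2)=g$. One caveat: the target you quote, $\tfrac12(n^3+1)(n^2-1)+1$, reproduces a typo in the paper's genus formula; the correct genus of the GK curve is $\tfrac12(n^3+1)(n^2-2)+1=\tfrac12(n^5-2n^3+n^2)$, and that is what the triple count actually yields.
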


\begin{proof}
It is not difficult to see that all the functions on $T$ are distinct. Let

$
\begin{array}{ll}
S_1  & \displaystyle = \sum_{k=1}^{n-1}|T_k | \\
                             & \displaystyle = \sum_{k=1}^{n-1} \left( \sum_{i=0}^{k} (n^2 - n - k - i) + \sum_{i=k+1}^{n} (n^2 - n +1) \right) \\
                              & \displaystyle = \sum_{k=1}^{n-1} \left( \dfrac{2n^3 - k^2 - 3k}{2} \right)\\ & = \dfrac{1}{6} (6n^4 - 7 n^3 -3n^2 + 4n);
\end{array}
$

\medskip

$
\begin{array}{ll}
S_2  & \displaystyle = \sum_{k=n}^{n^2 - n-2}|T_k | \\
                             & \displaystyle = \sum_{k=n}^{n^2-n-2} \left( \sum_{i=0}^{n} (n^2 - n - k - i) \right) \\ & = \dfrac{1}{2}(n^5 - 2n^4 - 5n - 2);
\end{array}
$

and

$
\begin{array}{ll}
S_3  & \displaystyle = \sum_{k=n^2-n-1}^{n^2 -1}|T_k | \\
                             & \displaystyle = \sum_{k=n^2-n-1}^{n^2-1} \left( \sum_{i=k-n^2+n+1}^{n} (n^2 - n - k - i) \right) \\ & = \dfrac{1}{6}(n^3 + 6n^2 + 11n + 6).
\end{array}
$

Therefore, $|T| = S_1 + S_2 + S_3 = \dfrac{1}{2} (n^5 - 2n^3 + n^2) = g$.

\end{proof}

\begin{lemma} \label{distinct divisor of poles}
Let $n \geq 3$. For each $k \in \{ 1,\ldots,n^2-1 \}$, let $T_k$ be as the previous lemma. If $f \in T_k$ and $g \in T_{k'}$, with $k,k' \in \{1,\ldots,n^2-1\}$ not necessary distinct, and $f \neq g$, then $(f)_{\infty} \neq (g)_{\infty}$.
\end{lemma}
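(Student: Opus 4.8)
The plan is to compute the pole divisor of each monomial $y^i z^j / x^k$ explicitly and show that the resulting lattice points are pairwise distinct. First I would use Proposition~\ref{divisors} to record the valuations of $x$, $y$, $z$ at $P_0$ and $P_\infty$. From $(x) = (n^3+1)P_0 - (n^3+1)P_\infty$ we get $v_{P_0}(x) = n^3+1$ and $v_{P_\infty}(x) = -(n^3+1)$. Since $P_0 = P_{(0,0,0)}$ appears in the support of $(y)$ with coefficient $n^2-n+1$ and of $(z)$ with coefficient $1$, we get $v_{P_0}(y) = n^2-n+1$, $v_{P_0}(z) = 1$, while $v_{P_\infty}(y) = -n(n^2-n+1) = -(n^3-n^2+n)$ and $v_{P_\infty}(z) = -n^3$. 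Then, for $f = y^i z^j/x^k \in T_k$, one checks that $f$ has poles only at $P_0$ and $P_\infty$: away from these two points, $x$ has neither zero nor pole on the relevant locus, and the only other zeros of $y$ and $z$ are at points $P_{(a,b,0)}$ which, for the ranges of $i,j$ prescribed, are compensated or simply do not contribute a pole of $f$ — this needs to be argued using that $v_{P_{(a,b,0)}}(z) \geq 1$ and $v_{P_{(a,b,0)}}(x) = 0$, $v_{P_{(a,b,0)}}(y) = 0$ for $b \neq 0$, and a short separate check at the points where $b = 0$ but $a \neq 0$.

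Granting that $(f)_\infty = a_f P_0 + b_f P_\infty$, the exponents are
$a_f = \max\{0,\, k(n^3+1) - i(n^2-n+1) - j\}$ and
$b_f = \max\{0,\, i(n^3-n^2+n) + jn^3 - k(n^3+1)\}$,
and by the defining inequalities of $T_k$ one of the two is automatically zero (or one shows directly that $(f)_\infty$ is as tabulated). The heart of the argument is then purely combinatorial: I must show the map $(i,j,k) \mapsto (a_f, b_f)$ is injective on the index set $\bigcup_k \{(i,j) : y^i z^j/x^k \in T_k\}$. I would split into cases according to whether $b_f = 0$ (the ``$P_0$-side''), $a_f = 0$ (the ``$P_\infty$-side''), or neither. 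On the $P_0$-side, $b_f = 0$ forces $a_f = k(n^3+1) - i(n^2-n+1) - j$, and since $0 \le j \le n^2-n$ and $0 \le i \le n$ one shows $i(n^2-n+1)+j$ determines $(i,j)$ uniquely in an appropriate residue range, so $a_f$ together with the reduction mod $n^3+1$ pins down $k$, then $i$, then $j$. The symmetric computation handles the $P_\infty$-side.

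The main obstacle I anticipate is the bookkeeping when $f$ and $g$ come from different $T_k$ with different $k$: one needs the size constraints $i \le n$, $j \le n^2-n$ to guarantee that the "weight" $i(n^2-n+1)+j$ (resp. the analogous $P_\infty$-weight) stays in a window short enough that no two distinct $(i,j,k)$-triples can collide, and checking that the three defining ranges of $k$ (namely $1 \le k \le n-1$, $n \le k \le n^2-n-2$, $n^2-n-1 \le k \le n^2-1$) do not produce overlapping images across the boundaries. I would also need to be careful about the borderline cases where $a_f$ or $b_f$ equals $0$ exactly, since then $f$ has a pole at only one of the two points and must be compared against the one-point functions; here the already-known structure of $H(P_0)$ and $H(P_\infty)$ from Proposition~\ref{proposition H(P)} can be invoked to rule out coincidences. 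Once injectivity is established, combined with $|T| = g$ from Lemma~\ref{lemma functions}, the $g$ pole divisors are distinct, which is exactly the claim.
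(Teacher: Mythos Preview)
Your overall strategy---compute the pole divisor of $y^iz^j/x^k$ explicitly and then prove injectivity of $(i,j,k)\mapsto (a_f,b_f)$---is exactly what the paper does. But you have one genuine misconception that is generating a lot of unnecessary work. You write that ``by the defining inequalities of $T_k$ one of the two is automatically zero,'' and then split into a $P_0$-side, a $P_\infty$-side, and borderline cases. This is wrong: for every $f\in T$ \emph{both} coefficients $a_f=k(n^3+1)-i(n^2-n+1)-j$ and $b_f=i(n^3-n^2+n)+jn^3-k(n^3+1)$ are strictly positive. (Look at the $n=2$ table in the paper: every pole divisor is $aP_0+bP_\infty$ with $a,b>0$; and Proposition~\ref{proposition gaps}(b) says precisely that $(a_f,b_f)\in G(P_0)\times G(P_\infty)$.) So the $\max\{0,\cdot\}$ is never triggered, there is no ``$P_0$-side'' versus ``$P_\infty$-side,'' and the borderline and one-point comparisons you worry about simply do not occur.

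Once that is corrected, the injectivity argument collapses to the clean version you already sketched inside your ``$P_0$-side'' case: since $0\le i\le n$ and $0\le j\le n^2-n$, one has $0\le i(n^2-n+1)+j\le n^3<n^3+1$, so $a_f$ determines $i(n^2-n+1)+j$ as its residue (in the sense of the representative in $\{0,\dots,n^3\}$) modulo $n^3+1$, hence determines $k$; and since $0\le j<n^2-n+1$, Euclidean division recovers $i$ and $j$. That single paragraph is the whole proof, and it works uniformly across all three ranges of $k$---your concern about ``overlapping images across the boundaries'' is a red herring, because the argument never uses which regime $k$ lies in. This is essentially the paper's proof (the paper just writes ``by the restrictions given over $i,j$ and $k$ follows that $i=i'$, $j=j'$, $k=k'$'' without spelling out the mod-$(n^3+1)$ step), so your core idea is right; you just need to drop the incorrect structural claim and the case analysis it spawned.
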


\begin{proof}
Let $f=\dfrac{y^i z^j}{x^k} \in T_k$, with $k \in \{ 1, \ldots, n^2-1 \}$, by (\ref{divisors}) and the conditions over $i$ and $j$ follows that

\medskip
$
\left( \dfrac{y^i z^j}{x^k} \right)_{\infty}  =(k (n^3 + 1) - i (n^2 - n +1) - j) P_{0}  +(i(n^3 - n^2 + n) + j n^3 - k(n^3 + 1))P_{\infty}.
$

\medskip

We know that $ 0 \leq i\leq n$, $0 \leq j \leq n^2-n$ and $1 \leq k \leq n^2 -1$. Suppose that exists $i,i' \in \{ 0,1,\ldots, n \}$, $j,j' \in \{ 0,1,\ldots, n^2-n \}$ and $k,k' \in \{ 1,\ldots, n^2-1 \}$ such that

\medskip

$k (n^3 + 1) - i (n^2 - n +1) - j = k' (n^3 + 1) - i' (n^2 - n +1) - j'$

and

$i(n^3 - n^2 + n) + j n^3 - k(n^3 + 1) = i'(n^3 - n^2 + n) + j' n^3 - k'(n^3 + 1)$.

\medskip

Then, by the restrictions given over $i,j$ and $k$ follows that $i=i'$, $j = j'$ and $k=k'$.
\end{proof}

\medskip

\begin{proposition} \label{proposition gaps}
Let $n \geq 3$ and let $T_k$ be as the Lemma \ref{lemma functions} and $\dfrac{y^i z^j}{x^k} \in T_k$ for some $k \in \{ 1,\ldots , n^2 -1 \}$. Then,

(a) $\left( \dfrac{y^i z^j}{x^k} \right)_{\infty} = (k (n^3 + 1) - i (n^2 - n +1) - j) P_{0} + (i(n^3 - n^2 + n) + j n^3 - k(n^3 + 1))P_{\infty}$;

\medskip

(b) $(k (n^3 + 1) - i (n^2 - n +1) - j, i(n^3 - n^2 + n) + j n^3 - k(n^3 + 1)) \in G(P_0) \times G(P_{\infty})$.
\end{proposition}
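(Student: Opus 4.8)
The plan is to prove part (a) by a direct divisor computation, and part (b) by showing that the two coordinates of the divisor of poles lie in $G(P_0)$ and $G(P_\infty)$ respectively, using the explicit description of these gap sets coming from Proposition \ref{proposition H(P)}.

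For part (a), I would simply combine the principal divisors in Proposition \ref{divisors}. Writing $f = y^i z^j / x^k$ we have $(f) = i(y) + j(z) - k(x)$, and by Proposition \ref{divisors} the contribution at $P_\infty$ is $\bigl(-i n(n^2-n+1) - j n^3 + k(n^3+1)\bigr)P_\infty$, while the contribution at $P_0$ is $\bigl(-i(n^2-n+1) - j + k(n^3+1)\bigr)P_0$ together with nonnegative contributions at the other points $P_{(a_i,0,0)}$ and $P_{(a_i,b_i,0)}$. The key point is that, under the membership conditions defining $T_k$ (which force the exponent of $P_0$ to be positive and the exponent of $P_\infty$ to be positive), these other points do not contribute to $(f)_\infty$: the zeros of $y$ and $z$ are absorbed and one gets exactly $(f)_\infty = (k(n^3+1) - i(n^2-n+1) - j)P_0 + (i(n^3-n^2+n) + jn^3 - k(n^3+1))P_\infty$, noting $n(n^2-n+1) = n^3-n^2+n$. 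I would point out that Lemma \ref{distinct divisor of poles} already implicitly uses this formula, so part (a) can be regarded as established there; here one just needs to check positivity of both coordinates under the $T_k$-conditions, case by case on the three ranges of $k$ (this is the routine bookkeeping in Lemma \ref{lemma functions}).

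For part (b), set $\alpha = k(n^3+1) - i(n^2-n+1) - j$ and $\beta = i(n^3-n^2+n) + jn^3 - k(n^3+1)$, so that $(\alpha,\beta)\in H(P_0,P_\infty)$ by part (a), and in particular $\alpha,\beta\geq 1$. I must show $\alpha\notin H(P_0)$ and $\beta\notin H(P_\infty)$; since $H(P_0)=H(P_\infty)=\langle n^3-n^2+n,\,n^3,\,n^3+1\rangle$ by Proposition \ref{proposition H(P)}, this amounts to showing that neither $\alpha$ nor $\beta$ is a nonnegative integer combination $a(n^3-n^2+n)+bn^3+c(n^3+1)$. The natural approach is to reduce modulo $n^3$ (or modulo $n^2-n+1$, a divisor of $n^3+1$) and exploit the tight bounds $0\le i\le n$, $0\le j\le n^2-n$, $1\le k\le n^2-1$: an element of the semigroup that is "small" in the relevant congruence class forces the generators used to be few, and the resulting inequalities contradict the ranges. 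Concretely, I would express $\alpha = k(n^3+1) - i(n^2-n+1) - j$ and argue that if $\alpha$ were in the semigroup then one could write $\alpha = k'(n^3+1) - i'(n^2-n+1) - j'$ with $(i',j',k')$ satisfying the same constraints but representing a semigroup element, contradicting the uniqueness established in Lemma \ref{distinct divisor of poles} — i.e. leverage the bijection-type rigidity rather than a brute residue computation. Symmetrically for $\beta$ using the $P_\infty$-side structure.

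The main obstacle will be part (b): verifying that $\alpha$ and $\beta$ genuinely avoid the numerical semigroup $\langle n^3-n^2+n, n^3, n^3+1\rangle$ for all admissible $(i,j,k)$ in the three regimes, without a tedious case explosion. I expect the cleanest route is to show that the $g$ divisors $(f)_\infty$ for $f\in T$ are pairwise distinct (done in Lemma \ref{distinct divisor of poles}) and that their first coordinates are pairwise distinct elements of the gap set $G(P_0)$ — which has exactly $g$ elements — so a counting argument forces equality of the set $\{\alpha : f\in T\}$ with $G(P_0)$, and likewise on the $P_\infty$-side; this would simultaneously prove (b) and set up the application of Lemma \ref{lemma 1} to identify $\Gamma(P_0,P_\infty)$ in the next step. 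The delicate point in that argument is checking the first coordinates are distinct, but this again follows from the uniqueness of $(i,j,k)$ in Lemma \ref{distinct divisor of poles} together with the fact that fixing $\alpha$ and requiring $(\alpha,\beta)\in\Gamma$ pins down $\beta$.
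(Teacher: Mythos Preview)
Your treatment of part (a) is correct and is exactly what the paper does: combine the principal divisors from Proposition~\ref{divisors} and check, using the inequalities built into the definition of each $T_k$, that both coefficients at $P_0$ and $P_\infty$ are positive while all other contributions are nonnegative.

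For part (b), the paper follows precisely the direct route you describe first: assume $k(n^3+1)-i(n^2-n+1)-j = a(n^3+1)+bn^3+c(n^3-n^2+n)$ with $a,b,c\ge 0$, use the size constraints $0\le i\le n$, $0\le j\le n^2-n$, $1\le k\le n^2-1$ to pin down $a+b+c\in\{k-1,k\}$, rewrite the remaining equation as $(c-i)(n^2-n)=i+j+a-k$, and derive a contradiction from $0<i+j+a-k<n^2-n$ in each of the cases $c-i\le 0$, $c-i=1$, $c-i\ge 2$. The $P_\infty$-coordinate is handled symmetrically. So your first instinct matches the paper.

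The ``cleaner'' counting/rigidity argument you propose at the end, however, does not work as stated. To conclude by counting that $\{\alpha_f:f\in T\}=G(P_0)$ you would already need to know that each $\alpha_f$ lies in $G(P_0)$, which is exactly statement~(b). Having $g$ pairwise distinct positive integers (and the first coordinates \emph{are} pairwise distinct, by the same bound argument used in Lemma~\ref{distinct divisor of poles}) does not force them to coincide with the $g$ gaps; nothing prevents some $\alpha_f$ from being a nongap a priori. Likewise, the appeal to ``fixing $\alpha$ and requiring $(\alpha,\beta)\in\Gamma$ pins down $\beta$'' is circular: membership in $\Gamma(P_0,P_\infty)$ is only established in Theorem~\ref{theorem Gamma}, which invokes the present proposition. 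Your attempt to convert a hypothetical semigroup representation of $\alpha$ into a second admissible triple $(i',j',k')$ and contradict Lemma~\ref{distinct divisor of poles} also stalls, because the coefficients $a,b,c$ in $a(n^3-n^2+n)+bn^3+c(n^3+1)$ need not satisfy the bounds $0\le a\le n$, $0\le b\le n^2-n$ required for that uniqueness. In short, the direct inequality analysis cannot be bypassed here; carry out the case split you sketched first.
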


\begin{proof}
(a) Follows by (\ref{divisors}) and the conditions over $i$ and $j$ on each $T_k$.

\medskip

(b) By Proposition \ref{proposition H(P)}, $H(P_0) = H(P_{\infty}) = \langle n^3 - n^2 + n , n^3 , n^3+1 \rangle$. Firstly, let us see that $k (n^3 + 1) - i (n^2 - n +1) - j \in G(P_0)$. In fact, suppose that exist $a,b,c \in \mathbb{N}_0$ such that
$$
k (n^3 + 1) - i (n^2 - n +1) - j = a(n^3 + 1) + b n^3 + c(n^3 - n^2 + n).
$$

Note that $0 \leq i \leq n$ and $0 \leq j \leq n^2-n$ and consequently we have that

\medskip

$
\left\{ \begin{array}{l}
\mbox{ if } i=n \Rightarrow a+b+c = k-1;\\
\mbox{ if } i<n \Rightarrow a+b+c = k.
\end{array} \right.
$

\medskip

Furthermore, we must have $-i(n^2-n+1) - j = -cn^2 + cn + a$ which implies

\begin{equation} \label{equation c i}
(c-i)(n^2 - n) = i+j+a-k.
\end{equation}

Now, by the conditions over $i$ and $j$ we have that $0 < i+j+a-k < n^2 - n$ (note that $a-k<0$). Thus,

$\bullet$ if $c-i\leq0$ or $c-i \geq 2$ it is easy to see that we have a contradiction in (\ref{equation c i});

$\bullet$ if $c-i=1$, then $i=c-1$ and we have $n^2-n = c-1+j+a-k$. But, for $i=n$, $c-1+j+a = j-b-2 < n^2 - n$, and for $i<n$, $c-1+a-k = j-b-1 < n^2 - n$, and we have a contradiction in both cases.

Therefore, $k (n^3 + 1) - i (n^2 - n +1) - j \notin H(P_0)$, that is $k (n^3 + 1) - i (n^2 - n +1) - j \in G(P_0)$.

Now, let us see that $i(n^3 - n^2 + n) + j n^3 - k(n^3 + 1) \in G(P_{\infty})$. As in the previous case, suppose that exist $a,b,c \in \mathbb{N}_{0}$ such that
\begin{small}
\begin{equation}\label{gaps P infinity}
i(n^3 - n^2 + n) + j n^3 - k(n^3 + 1) = a(n^3 + 1) + b n^3 + c(n^3 - n^2 + n).
\end{equation}
\end{small}
Again, by the conditions over $i,j$ and $k$ we must have

\medskip

$
\left\{ \begin{array}{l}
a+b+c = i+j-k, \mbox{ if } i<n;\\
a+b+c = n+j-k-1, \mbox{ if } i=n,
\end{array} \right.
$

\medskip

and $(c-i)(n^2 - n) = a+k$. Similarly to the previous case, follows a contradiction of equality given in (\ref{gaps P infinity}) and then $i(n^3 - n^2 + n) + j n^3 - k(n^3 + 1) \in G(P_{\infty})$.
\end{proof}

\medskip

\begin{theorem} \label{theorem Gamma}
Let $n \geq 3$, $P_0$ and $P_{\infty}$ be as above. Let

$\Gamma_{1} = \{ \gamma_{i,j,k} \mbox{ ; } 1 \leq k \leq n-1, \mbox{ } 0 \leq i \leq k, \mbox{  } k-i+1 \leq j \leq n^2 - n \}$;

$\Gamma_{2} = \{ \gamma_{i,j,k} \mbox{ ; } 1 \leq k \leq n-1, \mbox{ } k+1 \leq i \leq n, \mbox{  } 0 \leq j \leq n^2 - n \}$;

$\Gamma_{3} = \{ \gamma_{i,j,k}  \mbox{ ; } n \leq k \leq n^2-n-2, \mbox{ } 0 \leq i \leq n, \mbox{  } k-i+1 \leq j \leq n^2 - n \}$;

$\Gamma_{4} = \{ \gamma_{i,j,k}  \mbox{ ; } n^2-n-1 \leq k \leq n^2-1, \mbox{ } k-n^2+n+1 \leq i \leq n, \mbox{  } k-i+1 \leq j \leq n^2 - n \}$,

where in all sets above $\gamma_{i,j,k} = (k (n^3 + 1) - i (n^2 - n +1) - j, i(n^3 - n^2 + n) + j n^3 - k(n^3 + 1)) \in \mathbb{N}^2$. Then, $\Gamma(P_0,P_{\infty}) = \Gamma_{1} \cup \Gamma_{2} \cup \Gamma_{3} \cup \Gamma_{4}$.
\end{theorem}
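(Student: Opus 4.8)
The plan is to verify that the set $\Gamma' := \Gamma_1 \cup \Gamma_2 \cup \Gamma_3 \cup \Gamma_4$ satisfies the hypotheses of Lemma~\ref{lemma 1}, which then forces $\Gamma' = \Gamma(P_0, P_\infty)$. There are three things to check: first, that $\Gamma' \subseteq (G(P_0) \times G(P_\infty)) \cap H(P_0, P_\infty)$; second, that $|\Gamma'| = g$; and third, that the first coordinates of the points of $\Gamma'$ are exactly $G(P_0)$ with each gap occurring once, so that $\Gamma'$ is the graph of a permutation $\tau$ of $\{1,\dots,g\}$ in the sense required by Lemma~\ref{lemma 1}.

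The first point is already essentially done: Proposition~\ref{proposition gaps}(a) shows each $\gamma_{i,j,k}$ is the pole divisor data of the function $y^i z^j / x^k \in T_k$, so $\gamma_{i,j,k} \in H(P_0,P_\infty)$; and Proposition~\ref{proposition gaps}(b) shows $\gamma_{i,j,k} \in G(P_0) \times G(P_\infty)$. Note that $\Gamma_1 \cup \Gamma_2$ is indexed exactly by the functions in $T_k$ for $1 \le k \le n-1$, $\Gamma_3$ by $T_k$ for $n \le k \le n^2-n-2$, and $\Gamma_4$ by $T_k$ for $n^2-n-1 \le k \le n^2-1$, so $\Gamma'$ is precisely $\{(f)_\infty\text{-data} : f \in T\}$. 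The second point then follows immediately: by Lemma~\ref{distinct divisor of poles} the map $f \mapsto (f)_\infty$ is injective on $T$, so $|\Gamma'| = |T| = g$ by Lemma~\ref{lemma functions}.

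The main obstacle is the third point: showing the first-coordinate projection $\pi_1 : \Gamma' \to G(P_0)$ is a bijection. Since $|\Gamma'| = g = |G(P_0)|$, injectivity suffices. I would argue that if $\gamma_{i,j,k}$ and $\gamma_{i',j',k'}$ share a first coordinate, i.e. $k(n^3+1) - i(n^2-n+1) - j = k'(n^3+1) - i'(n^2-n+1) - j'$, then already $(i,j,k) = (i',j',k')$; this is a congruence/size argument analogous to Lemma~\ref{distinct divisor of poles}, exploiting that $0 \le j \le n^2 - n$, $0 \le i \le n$, and $1 \le k \le n^2-1$, so the representation $k(n^3+1) - i(n^2-n+1) - j$ with these bounded digits is essentially a base-$n^3$ type expansion and hence unique. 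One must be slightly careful because the index ranges defining $\Gamma_1,\dots,\Gamma_4$ overlap in $k$ only trivially (the four blocks partition the range of $k$), so there is no interaction between blocks beyond the single uniqueness statement. Once injectivity of $\pi_1$ is established, writing $\alpha_1 < \dots < \alpha_g$ for the gaps of $P_0$, each $\alpha_i$ equals the first coordinate of a unique $\gamma \in \Gamma'$ whose second coordinate lies in $G(P_\infty)$; defining $\tau(i)$ to be the index of that second coordinate in the ordered gap sequence of $P_\infty$ gives a permutation $\tau$ with $\Gamma' = \{(\alpha_i, \beta_{\tau(i)}) : i = 1,\dots,g\}$. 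Lemma~\ref{lemma 1} then yields $\Gamma' = \Gamma(P_0,P_\infty)$, completing the proof.
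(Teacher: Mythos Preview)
Your proof is correct and follows the same route as the paper: use Proposition~\ref{proposition gaps} to place each $\gamma_{i,j,k}$ in $(G(P_0)\times G(P_\infty))\cap H(P_0,P_\infty)$, use Lemmas~\ref{lemma functions} and~\ref{distinct divisor of poles} to count $|\Gamma'|=g$, and conclude via Lemma~\ref{lemma 1}. In fact you are more careful than the paper, which simply invokes Lemma~\ref{lemma 1} from the cardinality $|\Gamma'|=g$ without explicitly verifying the permutation hypothesis; your size argument showing $\pi_1$ is injective (forcing $k=k'$ since $|(i-i')(n^2-n+1)+(j-j')|\le n^3<n^3+1$, then $i=i'$, then $j=j'$) is exactly the missing step.

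One small remark: for $\tau$ to be a genuine permutation you also need $\pi_2$ injective, i.e.\ the second coordinates $i(n^3-n^2+n)+jn^3-k(n^3+1)$ to be pairwise distinct. You do not state this, but the identical digit-bound argument works (now $|i-i'|\cdot n\le n^2<n^3$ forces $j=j'$ first, then $k=k'$, then $i=i'$), so this is not a real gap.
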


\begin{proof}

By Lemma \ref{distinct divisor of poles}, we have that $(k (n^3 + 1) - i (n^2 - n +1) - j, i(n^3 - n^2 + n) + j n^3 - k(n^3 + 1)) \neq (k' (n^3 + 1) - i' (n^2 - n +1) - j', i'(n^3 - n^2 + n) + j' n^3 - k'(n^3 + 1))$ if $(i,j,k) \neq (i', j',k')$, and, by Lemma \ref{lemma functions}, we have that the numbers of triples $(i,j,k)$ is equal to genus $g$ of the curve $GK$.

Finally, by Proposition \ref{proposition gaps}, $(k (n^3 + 1) - i (n^2 - n +1) - j, i(n^3 - n^2 + n) + j n^3 - k(n^3 + 1)) \in (G(P_{\infty}) \times G(P_{0})) \cap H(P_{\infty}, P_{0})$ and the result follows by Lemma \ref{lemma 1} and the fact that the number of pairs $(k (n^3 + 1) - i (n^2 - n +1) - j, i(n^3 - n^2 + n) + j n^3 - k(n^3 + 1))$ is equal to $g$.
\end{proof}

\section{Two-Point codes on GK curve}
In this section we present two-point codes over $GK$ whose parameters are new records in the MinT's tables. In addition, we present some two-point codes that have better relative parameters when compared with certain one-point codes presented in \cite{GKcodes}.

\begin{example}
Consider the curve $GK$ with affine equations \[ Z^3=Y(1+X+X^2)\;,\qquad X^2+X=Y^3\;,\] given at the beginning of the previous section. This curve has $225$ $\mathbb{F}_{64}$-rational points and its genus is $g=10$. Remember that $H(P_0)=H(P_\infty)=\langle 6,8,9 \rangle$, $G(P_0)=G(P_\infty)=\{1,2,3,4,5,7,10,11,13,19\}$, $\Gamma(P_0, P_\infty)=$ $\{(1,19),$ $(2,11),$ (3,3),$ (4,13),$ $(5,5),$ $(7,7),$ $(10,10),$ $(11,2),$ $(13,4),(19,1)\}$ and  $H(P_{\infty},P_{0}) = \{ \mbox{lub} (\mathbf{x},\mathbf{y}) \mbox{ : } \mathbf{x},\mathbf{y} \in \Gamma(P_0, P_{\infty}) \cup (H(P_0) \times \{0\}) \cup (\{0\} \cup H(P_{\infty})) \}$.

Let $B=GK(\mathbb{F}_{64}) \setminus \{P_0,P_{\infty}\}$ and $\displaystyle D=\sum_{P\in B} P$. Consider the two-point code $C_\Omega=C_\Omega(D,(a_1+b_1-1)P_{0} + (a_2+b_2-1)P_\infty)$ of length $223$, where $a_1,a_2,b_1,b_2 >0$. If $18<\deg(G)<223$, then the dimension of the code is $k_{\Omega} = n - deg(G) + g -1 = 232- deg(G)$. Using the Theorem \ref{theorem gretchen}, we find codes $C_{\Omega}$ whose parameters are new records in MinT's tables, \cite{MinT}, in addition to improving the parameters of some codes discovered by Fanali and Guilietti in \cite{GKcodes}. The Table I show the values of $a_1,a_2,b_1$ and $b_2$ for the construction of those codes $C_{\Omega}$ with the respective dimension $k_{\Omega}$ and the bound of the minimum distance $d_{\Omega}$.

\begin{table}[h!]
\caption{}
\begin{center}
\begin{tabular}{|c|c|c|c|c|c|c|}
  \hline
  $a_1$ & $b_1$ & $a_2$ & $b_2$ & $n$ & $k_{\Omega}$ & $d_{\Omega}$ \\
  \hline
  $13$ & $10$ & $3$ & $9$ & $223$ & $199$ & $\geq 16$ \\
  $13$ & $10$ & $3$ & $10$ & $223$ & $198$ & $\geq 17$ \\
  \hline
\end{tabular}
\end{center}
\end{table}

From those two codes on the Table I we can obtain others $26$ new codes by using the Proposition \ref{proposition s}, taking $s\in \{1,2,3,\ldots 13\}$. Such codes have parameters $[223-s,199-s,\geq 16]$ and $[223-s,198-s,\geq 17]$ and all of them also improve the parameters found on MinT's tables and those codes discovered by Fanali and Guilietti in \cite{GKcodes}.
\end{example}

\medskip

For the next example let us remember that given a $[n,k,d]$-code $C$, we define its \emph{information rate} by $R=k/n$ and its \emph{relative minimum distance} by $\delta = d/n$. These parameters allows us to compare codes with different length.

\medskip

\begin{example}
Consider the curve $GK$ with affine equations \[Z^7=Y(2+X^2+2X^4+X^6)\;,\qquad X^3+X=Y^4\;.\] This curve has $6076$ $\mathbb{F}_{3^6}$-rational points and genus $g=99$. In this case $H(P_0)=H(P_\infty)=\langle 21,27,28 \rangle$ and, by Theorem \ref{theorem Gamma}, we have:

$\Gamma_1=\{(26, 26),$ $( 25, 53),$ $( 25,53),$ $( 24,80),$ $( 23,107),$ $( 22,134),$ $( 20,20),$ $( 19,47), $ $(18,74),$ $(17,101),$ $( 16,128),$ $( 15,155),$ $( 53,25),$ $( 52,52),$ $( 51,79),$ $( 50,106),$ $( 47,19),$ $( 46,46),$ $( 45,73),$ $( 44,100),$ $( 43,127),$ $( 41,13),$ $( 40,40),$ $( 39,67),$ $( 38,94),$ $( 37,121),$ $( 36,148)\}\;,$

$\Gamma_2=\{(14,14),$ $( 13 ,41),$ $( 12, 68),$ $( 11 ,95),$ $( 10 ,122),$ $( 9 1,49),$ $( 8 1,76),( 7 ,35),$ $( 6 ,62),$ $( 5 ,89),$ $( 4 ,116),$ $( 3 ,143),$ $( 2 ,170),$ $( 1 ,197),$ $( 35, 7),$ $( 34 ,34),$ $( 33 ,61),$ $( 32 ,88),$ $( 31 ,115),$ $( 30 ,142),$ $( 29 ,169)\}\;,$

$\Gamma_3=\{(80 ,24),$ $( 79 ,51),$ $( 78 ,78),$ $( 74 ,18),$ $( 73 ,45),$ $( 72 ,72),$ $( 71 ,99),$ $( 68 ,12),$ $( 67 ,39),$ $( 66 ,66),$ $( 65 ,93),$ $( 64 ,120),$ $( 62 ,6),$ $( 61 ,33),$ $( 60 ,60),$ $( 59 ,87),$ $( 58 ,114),$ $( 57 ,141),$ $( 107, 23),$ $( 106 ,50),$ $( 101 ,17),$ $( 100 ,44),$ $( 99 ,71),$ $( 95 ,11),$ $( 94 ,38),$ $( 93 ,65),$ $( 92 ,92),$ $( 89 ,5),$ $( 88 ,32),$ $( 87 ,59),$ $( 86 ,86),$ $( 85 ,113) \}\;,$

$\Gamma_4=\{(134 ,22),$ $( 128 ,16),$ $( 127 ,43),$ $( 122 ,10),$ $( 121 ,37),$ $( 120 ,64),$ $( 116 ,4),$ $( 115 ,31),$ $( 114 ,58),$ $( 113 ,85),$ $( 155 ,15),$ $( 149 ,9),$ $( 148 ,36),$ $( 143 ,3),$ $( 142 ,30),$ $( 141 ,57),$ $( 176 ,8),$ $( 170 ,2),$ $( 169 ,29),$ $( 197 ,1) \}\;.$

$\Gamma(P_0, P_\infty)=\Gamma_1\cup \Gamma_2\cup\Gamma_3\cup\Gamma_4$.

Consider the two-point code $C_\Omega=C_\Omega(D,(a_1+b_1-1)P_{0} + (a_2+b_2-1)P_\infty)$ of length $6074$, where $(a_1,a_2)=(196,1)$ and $(b_1,b_2)=(92,92-\ell)$, with $\ell\in\{0,1,\ldots, 12\}$. Then, the conditions of the Theorem \ref{theorem gretchen} are satisfied and we obtain thirteen two-point AG codes with parameters $[6074,5793-\ell,\geq 184+\ell]$. Theses thirteen two-point AG codes has relative parameters better than the one-point codes corresponding given on Table IV in \cite{GKcodes}.

\end{example}

\end{document}